\newtheorem{theorem}{Theorem}[section]
\newtheorem{corollary}[theorem]{Corollary}
\newtheorem{lemma}[theorem]{Lemma}
\newtheorem{proposition}[theorem]{Proposition}
\theoremstyle{definition}
\newtheorem{remark}[theorem]{Remark}
\def\11{\textbf{$1$}}
\begin{document}

\numberwithin{equation}{section}

\title[A Kowalski-S{\l}odkowski theorem for von Neumann algebras]{A Kowalski-S{\l}odkowski theorem for 2-local $^*$-homomorphisms on von Neumann algebras}

\author[Burgos]{Maria Burgos}
\email{maria.burgos@uca.es}
\address{Departamento de Matematicas, Facultad de Ciencias Sociales y de la Educacion,  Universidad de Cadiz, 11405, Jerez de la Frontera, Spain.}

\author[Fern\'{a}ndez-Polo]{Francisco J. Fern\'{a}ndez-Polo}
\email{pacopolo@ugr.es}
\address{Departamento de An{\'a}lisis Matem{\'a}tico, Facultad de
Ciencias, Universidad de Granada, 18071 Granada, Spain.}

\author[Garc{\' e}s]{Jorge J. Garc{\' e}s}
\email{jgarces@correo.ugr.es}
\address{Departamento de An{\'a}lisis Matem{\'a}tico, Facultad de
Ciencias, Universidad de Granada, 18071 Granada, Spain.}

\author[Peralta]{Antonio M. Peralta}
\email{aperalta@ugr.es}
\address{Departamento de An{\'a}lisis Matem{\'a}tico, Facultad de
Ciencias, Universidad de Granada, 18071 Granada, Spain.}
\curraddr{Visiting Professor at Department of Mathematics, College of Science, King Saud University, P.O.Box 2455-5, Riyadh-11451, Kingdom of Saudi Arabia.}

\thanks{Authors partially supported by the Spanish Ministry of Science and Innovation,
D.G.I. project no. MTM2011-23843, and Junta de Andaluc\'{\i}a grant FQM375.
The fourth author extends his appreciation to the Deanship of Scientific Research at King Saud University (Saudi Arabia) for funding the work through research group no. RGP-361.}

\subjclass[2011]{Primary 47B49 46L40 (46L57 47B47 47D25 47A 15A99 16S50 47L30)} 

\keywords{Local homomorphism, local $^*$-homomorphism; 2-local homomorphism, 2-local $^*$-homomorphism; 2-local automorphism, 2-local $^*$-automorphism}

\date{}
\maketitle

\begin{abstract} It is established that every (not necessarily linear) 2-local $^*$-homomorphism from a von Neumann algebra into a C$^*$-algebra is linear and a $^*$-homomorphism. In the setting of (not necessarily linear) 2-local $^*$-homomorphism from a compact C$^*$-algebra we prove that the same conclusion remains valid. We also prove that every 2-local Jordan $^*$-homomorphism from a JBW$^*$-algebra into a JB$^*$-algebra is linear and a Jordan $^*$-homomorphism.
\end{abstract}

\maketitle
\thispagestyle{empty}

\section{Introduction}\label{sec:intro}

The Gleason-Kahane-\.{Z}elazko theorem (cf. \cite{Gle,KaZe}), a fundamental contribution in the theory of Banach algebras, asserts that every unital linear functional $F$ on a complex unital Banach algebra $A$ such that, $F (a)$ belongs to the spectrum, $\sigma (a)$, of $a$ for every $a\in A$, is multiplicative. In modern terminology, this is equivalent to say that every unital linear local homomorphism from a unital complex Banach algebra $A$ into $\mathbb{C}$ is multiplicative. We recall that a linear mapping $T$ from a Banach algebra $A$ into a Banach algebra $B$ is said to be a \emph{local homomorphism} if for every $a$ in $A$ there exists a homomorphism  $\Phi_{a} : A \to B$, depending on $a$, satisfying $T(a) = \Phi_a (a)$. \emph{Local derivations} are similarly defined. Kadison \cite{Kad90} and Larson and Sourour \cite{LarSou} made the first contributions to the theory of local derivations and local automorphisms on Banach algebras, respectively. Briefly speaking, Johnson  \cite{John01}, culminated the studies on local derivations, showing that every local derivation from a C$^*$-algebra $A$ into a Banach $A$-bimodule is a derivation. A wide list of authors, studied local homomorphisms between C$^*$-algebras (we refer to the introduction of \cite{Pe2014} for a recent expository paper on linear local homomorphisms).\smallskip

After the Gleason-Kahane-\.{Z}elazko theorem was established, Kowalski and S{\l}odkowski \cite{KoSlod} showed that at the cost of requiring the local behavior at two points, the condition of linearity can be dropped, that is, suppose $A$ is a complex Banach algebra (not necessarily commutative nor unital), then every (not necessarily linear) mapping $T: A \to \mathbb{C}$ satisfying $T(0)=0$ and $T(x-y)\in \sigma (x-y),$ for every $x,y\in A$, is multiplicative and linear.\smallskip

Let $A$ and $B$ be two (complex) Banach algebras. Following the standard notation introduced by P. \v{S}emrl in \cite{Semrl97}, a (not necessarily linear nor continuous) mapping $T: A\to B$ is said to be a \emph{2-local homomorphism} (respectively, a \emph{2-local isomorphism}) if for every $a,b\in A$ there exists a bounded (linear) homomorphism (respectively, a bounded isomorphism) $\Phi_{a,b}: A\to B$, depending on $a$ and $b$, such that $\Phi_{a,b} (a) = T(a)$ and $\Phi_{a,b}(b) = T(b)$. \emph{2-local Jordan homomorphisms}, \emph{2-local Jordan monomorphisms} and \emph{2-local Jordan automorphisms} are defined in a similar fashion. We recall that a linear mapping $\Phi: A\to B$ is said to be a Jordan homomorphism whenever $\Phi (a^2) = \Phi (a)^2$ (equivalently, $\Phi$ preserves the Jordan products of the form $a\circ b := \frac12 (a b + ba)$).\smallskip

When $A$ and $B$ are C$^*$-algebras, a mapping $T: A\to B$ is called a \emph{2-local $^*$-homomorphism} (respectively, a \emph{2-local $^*$-isomorphism}) if for every $a,b\in A$ there exists a {$^*$-homomorphism} (respectively, a $^*$-isomorphism) $\Phi_{a,b}: A\to B$, depending on $a$ and $b$, such that $\Phi_{a,b} (a) = T(a)$ and $\Phi_{a,b}(b) = T(b)$.  In the case $A=B$, 2-local isomorphisms, 2-local Jordan isomorphisms, 2-local $^*$-isomorphisms, and 2-local Jordan $^*$-isomorphisms are called \emph{2-local automorphisms}, \emph{2-local Jordan automorphisms}, \emph{2-local $^*$-automorphisms}, and \emph{2-local Jordan $^*$-automorphisms}, respectively.\smallskip

According to this more recent notation, the result established by Kowalski and S{\l}odkowski in \cite{KoSlod} proves that every (not necessarily linear) 2-local homomorphism $T$ from a  (not necessarily commutative nor unital) complex Banach algebra $A$ into the complex field $\mathbb{C}$ is linear and multiplicative. Consequently, every (not necessarily linear) 2-local homomorphism $T$ from $A$ into a commutative C$^*$-algebra is linear and multiplicative.\smallskip

In 1997, \v{S}emrl \cite{Semrl97} proves that for every infinite-dimensional separable Hilbert space $H$, every 2-local automorphism $T: B(H) \to B(H)$ is an automorphism. Short and elegant proofs of \v{S}emrl's theorem for 2-local automorphisms of matrix algebras were obtained by Molnár \cite{Mol2003} and Kim and Kim \cite{KimKim04}. In the just quoted paper \cite{Mol2003}, Molnár also proves that 2-local automorphisms of operator algebras containing all compact operators on a Banach space with a Schauder basis are automorphisms. More studies on 2-local Jordan automorphisms on the algebra of all $n\times n$ real or complex matrices were developed by Fo\v{s}ner in \cite{Fos2012}, where it is additionally established that every 2-local Jordan automorphism of any subalgebra of $B(X)$ which contains the ideal of all compact operators on $X$, where $X$ is a real or complex separable Banach space is either an automorphism or an anti-automorphism. The same author investigates, in \cite{Fos2014}, 2-local $^*$-automorphisms, 2-local $^*$-antiautomorphisms, and 2-local Jordan $^*$-derivations on $M_{n} (\mathbb{C})$, $B(H)$ and certain unital standard operator algebras on $H$ with involution.
\smallskip

In 2012, Ayupov and Kudaybergenov introduce new techniques to generalize \v{S}emrl's theorem for arbitrary Hilbert spaces, showing that 2-local automorphisms on the algebra $B(H)$ on an arbitrary (no separability is assumed) Hilbert space $H$ are automorphisms (see \cite{AyuKuday2012}).\smallskip

Assuming linearity Hadwin and Li \cite[Theorem 3.7]{HadLi04} prove that every bounded linear and unital 2-local homomorphism (respectively, 2-local $^*$-homomorphism) from a unital C$^*$-algebra of real rank zero into itself is a homomorphism (respectively, a $^*$-homomorphism). Under the same additional assumptions, Pop \cite{Pop} establishes that every bounded linear 2-local homomorphism (respectively, 2-local $^*$-homomorphism) from a von Neumann algebra into another C$^*$-algebra is a homomorphism (respectively, a $^*$-homomorphism) \cite[Corollary 3.6]{Pop}. In 2006, Liu and Wong prove that every linear 2-local automorphism $T$ of a C$^*$-algebra whose range is a C$^*$-algebra is an algebra homomorphism. Actually every bounded linear 2-local homomorphism between C$^*$-algebras is a homomorphism (cf. \cite{Pe2014}).\smallskip

In the setting of C$^*$-algebras, Kim and Kim \cite{KimKim05} prove that every surjective 2-local $^*$-automorphism on a prime C$^*$-algebra or on a C$^*$-algebra such that the identity element is properly infinite is a $^*$-automorphism. An illustrative example provided by Gy\H{o}ry in \cite{Gy} proves the existence of non-surjective linear 2-local $^*$-automorphisms between $C_0(L)$-spaces, which shows that the hypothesis concerning surjectivity cannot be relaxed in the result established by Kim and Kim. \smallskip

In this paper we study (not necessarily linear nor continuous) 2-local homomorphisms and 2-local $^*$-homomorphisms between general C$^*$-algebras. Our main result (Theorem \ref{t 2-local hom on von Neumann algebras}) establishes that every (not necessarily linear) 2-local $^*$-homomorphism from a von Neumann algebra into a C$^*$-algebra is linear and a $^*$-homomorphism. The techniques involve several applications of the Bunce-Wright-Mackey-Gleason theorem \cite{BuWri92,BuWri94}, and a subtle variant for dual or compact C$^*$-algebras due to Aarnes \cite{Aarnes70}. In the setting of (not necessarily linear) 2-local $^*$-homomorphism from a compact C$^*$-algebra we prove that the conclusion of Theorem \ref{t 2-local hom on von Neumann algebras} remains valid (Theorem \ref{t 2-local compact C*-algebras}).\smallskip

Finally, in section 4, we prove that every (not necessarily linear) 2-local Jordan $^*$-homomorphism from a JBW$^*$-algebra into a JB$^*$-algebra is linear and a Jordan $^*$-homomorphism (compare Theorem \ref{t 2-local Jordan hom on JBW-algebras}).\smallskip

It would be of great interest to extend Theorem \ref{t 2-local hom on von Neumann algebras} (respectively,  Theorem \ref{t 2-local Jordan hom on JBW-algebras}) to C$^*$-algebras (respectively, JB$^*$-algebras), but in this case projections and partial isometries are useless, because there exist C$^*$-algebras lacking of non-trivial projections.

\section[2-local $^*$-homomorphisms]{2-local $^*$-homomorphisms on von Neumann algebras}

When $A$ is a C$^*$-algebra or a JB$^*$-algebra, the symbol $A_{sa}$ will
stand for the set of all self-adjoint elements in $A$.\smallskip

We begin our study gathering some basic properties on 2-local mappings. A mapping $f$ between Banach algebras $A$ and $B$ is said to be \emph{zero products preserving} if the implication $$a b =0 \Rightarrow f(a) f(b) =0$$ holds for every $a,b\in A$. Let us recall that elements $a, b$ in a C$^*$-algebra $A$ are said to be \emph{orthogonal} (denoted by $a \perp b$) whenever $a b^* = b^* a=0$. A map $f$ from $A$ to another C$^*$-algebra is called \emph{orthogonality preserving} when $f(a)\perp f(b),$ for every $a,b\in A$ with $a\perp b$.\smallskip

Throughout this note, given a 2-local homomorphism (respectively, a 2-local $^*$-homomorphism) $T:A\to B$ between Banach algebras (respectively, between C$^*$-algebras) and elements $a,b\in A$, the symbol  $\Phi_{a,b}$ will denote a (linear) homomorphism (respectively, $^*$-homomorphism) satisfying $T(a) = \Phi_{a,b} (a)$ and $T(b) = \Phi_{a,b}(b)$.

\begin{lemma}\label{l basic properties} Let $T: A\to B$ be a (not necessarily linear nor continuous) 2-local homomorphism between (complex) Banach algebras. The following statements hold:\begin{enumerate}[$(a)$] \item $T$ is 1-homogeneous, that is, $T(\lambda a) = \lambda T(a)$ for every $a\in A$, $\lambda\in \mathbb{C}$;
\item $T$ maps idempotents in $A$ to idempotents in $B$;
\item $T$ is zero-products preserving;
\item $T(a)^2 = T(a^2)$, for every $a\in A$.
\end{enumerate}
\noindent Under the additional hypothesis of $T$ being a 2-local $^*$-homomorphism between C$^*$-algebras we have:
\begin{enumerate}[$(e)$]
\item[$(e)$]
 $T$ is 1-Lipschitzian and automatically continuous, that is, $$\| T(a) - T(b)\| \leq \| a-b\|,$$ for every $a,b\in A$;
\item[$(f)$]
 $T(a^*) = T(a)^*$, for every $a\in A$;
\item[$(g)$]
 $T$ maps projections in $A$ to projections in $B$;
\item[$(h)$]
 $T$ preserves orthogonality.
\item[$(i)$] $T(a)T(a)^* =  T(a a^*),$ and  $T(a)^* T(a) = T(a^* a)$, for every $a\in A$.
\end{enumerate}
\end{lemma}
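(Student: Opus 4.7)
The plan is to derive all nine properties by a single template: for each assertion, which only involves one or two ``named'' elements of $A$, choose the anchor pair of $\Phi_{a,b}$ so that every relevant argument of $T$ in the statement lies in $\{a,b\}$, and then transfer the corresponding algebraic identity from the genuine (linear, multiplicative, and, in the $^*$-setting, involution-preserving) homomorphism $\Phi_{a,b}$ to $T$ via the defining equalities $T(a)=\Phi_{a,b}(a)$ and $T(b)=\Phi_{a,b}(b)$.

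Concretely, for (a) I would take $\Phi_{a,\lambda a}$ and invoke linearity of the homomorphism; for (b), the pair $\Phi_{e,e}$ together with $\Phi(e)=\Phi(e^2)=\Phi(e)^2$; for (c), the pair $\Phi_{a,b}$ applied to $ab=0$ and using $\Phi(0)=0$; and for (d), the pair $\Phi_{a,a^2}$. In the $^*$-setting, (e) follows from $\|T(a)-T(b)\|=\|\Phi_{a,b}(a-b)\|\le\|a-b\|$, since $^*$-homomorphisms between C$^*$-algebras are automatically contractive; (f) uses $\Phi_{a,a^*}$ and the identity $\Phi(a^*)=\Phi(a)^*$; (g) is an immediate combination of (b) and (f); for (h), given $a\perp b$, the pair $\Phi_{a,b}$ yields $T(a)T(b)^*=\Phi_{a,b}(a)\Phi_{a,b}(b^*)=\Phi_{a,b}(ab^*)=0$ and symmetrically $T(b)^*T(a)=\Phi_{a,b}(b^*a)=0$; finally (i) is obtained from $\Phi_{a,aa^*}$ and $\Phi_{a,a^*a}$ in the same manner as (d).

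The only genuine subtlety is conceptual rather than computational: because $\Phi_{a,b}$ depends on the chosen pair, the linearity and multiplicativity of $\Phi_{a,b}$ can be cashed in only for relations whose data involve at most two elements of $A$. This is precisely the obstruction that blocks a direct proof of full additivity or multiplicativity of $T$ at this stage (any such identity would involve at least three elements, e.g.\ $a$, $b$ and $a+b$), and it is the reason why the main theorems of the paper must bring in heavier tools such as the Bunce--Wright--Mackey--Gleason theorem. For the present lemma, however, the two-element template suffices throughout.
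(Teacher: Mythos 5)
Your proposal is correct and follows essentially the same route as the paper: each item is obtained by choosing the anchor pair of $\Phi_{a,b}$ so that all arguments of $T$ appearing in the identity lie in that pair, and then transferring the algebraic relation from the genuine $^*$-homomorphism (the paper uses exactly the pairs $\Phi_{a,\lambda a}$, $\Phi_{e,e}$, $\Phi_{a,b}$, $\Phi_{a,a^2}$, $\Phi_{a,a^*}$, $\Phi_{a,aa^*}$, and deduces $(g)$ and $(h)$ from the earlier items just as you do). Your closing observation about why this two-point template cannot yield full additivity is accurate and consistent with the paper's subsequent reliance on the Bunce--Wright--Mackey--Gleason theorem.
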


\begin{proof}$(a)$ For each $a\in A$, $\lambda\in \mathbb{C}$, let us consider the homomorphism $\Phi_{a,\lambda a}$. Then $T(\lambda a) = \Phi_{a,\lambda a} (\lambda a) = \lambda \Phi_{a,\lambda a} (a) = \lambda T(a)$.\smallskip

$(b)$ For each idempotent $e$ in $A$, we have $$T(e)^2 = \Phi_{e,e} (e)^2 = \Phi_{e,e} (e^2) = \Phi_{e,e} (e) = T(e).$$

$(c)$ Suppose $a b = 0$ for certain $a,b\in A$. Then $$T(a) T(b) = \Phi_{a,b} (a) \Phi_{a,b} (b) = \Phi_{a,b} (a b) =0.$$

$(d)$ Pick an element $a\in A$ and consider the homomorphism $\Phi_{a,a^2}$. Then $$T(a)^2 = \Phi_{a,a^2} (a)^2 = \Phi_{a,a^2} (a^2) = T(a^2).$$

$(e)$ Having in mind that every  $^*$-homomorphism between C$^*$-algebras is contractive, given $a,b\in A$, the $^*$-homomorphism $\Phi_{a,b}$ can be applied to prove the inequality $$\|T(a)-T(b) \| = \| \Phi_{a,b} (a) - \Phi_{a,b} (b) \| = \| \Phi_{a,b} (a- b) \|  \leq \| a-b\|,$$ which gives the desired statement.\smallskip

$(f)$ For each $a$ in $A$, we have $$T(a)^* = \Phi_{a,a^*} (a)^* = \Phi_{a,a^*} (a^*) = T(a^*).$$

The statements $(g)$ and $(h)$ are clear from the previous ones. Finally, to prove $(i)$, we take a $^*$-homomorphism $\Phi_{a,aa^*}$ satisfying $T(a) = \Phi_{a, aa^*} (a)$ and $T(a a^*) = \Phi_{a, aa^*} (a a^*)$, and we observe that $$T(a)T(a)^* = \Phi_{a, aa^*} (a) \Phi_{a, aa^*} (a)^* = \Phi_{a, aa^*} (a a^*) = T(a a^*).$$
\end{proof}

The next technical lemma establishes that every 2-local homomorphism between Banach algebras is additive on a couple of idempotents whose products are zero.

\begin{lemma}\label{l additivity on orthogonal idempotents} Let $T: A\to B$ be a (not necessarily linear nor continuous) 2-local homomorphism between (complex) Banach algebras. Let $e$ and $f$ be two idempotents in $A$ satisfying $e f = f e =0$. Then $T(e + f) = T(e) + T(f)$.
\end{lemma}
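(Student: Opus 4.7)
The plan is to introduce, alongside $p := T(e)$, $q := T(f)$, $r := T(e+f)$, the auxiliary value $t := T(e-f)$, and to extract two independent expressions for $t$ from 2-locality, equating them to force $r = p + q$. As a preliminary I would observe that $(e-f)^2 = e+f$ (because $ef = fe = 0$), so Lemma \ref{l basic properties}$(d)$ gives $t^{2} = r$, while $(b)$ makes $p,q,r$ idempotents in $B$.

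The heart of the argument is to apply 2-locality at two pairs that both involve $e-f$. Taking a homomorphism $\Phi := \Phi_{e,\, e-f}$, linearity forces $\Phi(f) = \Phi(e) - \Phi(e-f) = p - t$. Invoking that $\Phi$ is multiplicative together with $ef = fe = 0$ and $f^{2} = f$, I would extract $pt = tp = p$ and $(p-t)^{2} = p - t$; expanding the latter and substituting $t^{2} = r$ collapses it to $t = 2p - r$. A symmetric computation with $\Phi' := \Phi_{f,\, e-f}$, where now $\Phi'(e) = q + t$, produces $qt = tq = -q$ and $(q+t)^{2} = q + t$, which simplifies to $t = r - 2q$. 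Comparing the two expressions gives $2p - r = r - 2q$, hence $T(e+f) = r = p + q = T(e) + T(f)$.

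I do not anticipate a serious technical obstacle; the computation is purely algebraic once the correct pairs have been selected. The one conceptual subtlety, which is where I would focus attention, is spotting that $e-f$ is the right auxiliary element: the identity $(e-f)^{2} = e+f$ is what lets the nonlinear relation $T(a^{2}) = T(a)^{2}$ from Lemma \ref{l basic properties}$(d)$ link the values of $T$ at $e+f$ and $e-f$, and that link is precisely what makes the two applications of 2-locality comparable.
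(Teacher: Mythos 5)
Your proof is correct and follows essentially the same route as the paper's: both hinge on the auxiliary element $e-f$, the identity $(e-f)^2=e+f$ fed through Lemma \ref{l basic properties}$(d)$, and the two pairs $\{e,e-f\}$ and $\{f,e-f\}$, yielding $T(e-f)=2T(e)-T(e+f)$ and $T(e-f)=T(e+f)-2T(f)$, which combine to give the claim. The only cosmetic difference is that the paper expresses the first relation via the idempotency of $T(e)+T(f-e)=\Phi_{e,f-e}(f)$ and obtains the second by swapping the roles of $e$ and $f$, whereas you expand $(p-t)^2$ and $(q+t)^2$ directly.
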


\begin{proof} The identity $T(e)+T(f-e)=\Phi_{e,f-e}(e)+\Phi_{e,f-e}(f-e)=\Phi_{e,f-e}(e+(f-e))=\Phi_{e,f-e}(f),$ implies that $T(e)+T(f-e)$ is an idempotent in $B$. Therefore, by Lemma \ref{l basic properties},
$$T(e)+T(f-e)=(T(e)+T(f-e))^2$$ $$=T(e)^2+T(f-e)^2+T(e)T(f-e)+T(f-e)T(e)$$ $$=T(e)+T((f-e)^2)+\Phi_{e,f-e} (e(f-e))+ \Phi_{e,f-e} ((f-e)e)$$ $$= T(e)+T(f+e)-T(e)-T(e),$$ which gives $$2T(e)=T(e+f)+T(e-f).$$ Replacing $e$ with $f$ we get $$ 2T(f)=T(e+f)+T(f-e) = T(e+f)- T(e-f),$$ and hence
$T(e)+T(f)=T(e+f).$
\end{proof}

We shall establish now the linearity of every 2-local homomorphism on a finite linear combination of idempotents having zero products.

\begin{lemma}\label{l linearity on orthogonal idempotents} Let $T: A\to B$ be a (not necessarily linear nor continuous) 2-local homomorphism between (complex) Banach algebras. Let $e_1,\ldots, e_n$ be idempotents in $A$ satisfying $e_i e_j = e_j e_i =0$ for every $i\neq j$. Then \begin{enumerate}[$(a)$] \item $\displaystyle T\left(\sum_{i=1}^{n} e_i \right) = \sum_{i=1}^{n} T(e_i) $;
\item $\displaystyle  T\left(\sum_{i=1}^{n} \lambda_i e_i \right) = \sum_{i=1}^{n} \lambda_i T(e_i) $, for every $\lambda_1, \ldots, \lambda_n\in \mathbb{C}$.
\end{enumerate}
\end{lemma}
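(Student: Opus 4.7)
The plan is to prove $(a)$ directly by induction on $n$. The base case $n=2$ is Lemma \ref{l additivity on orthogonal idempotents}. For $n \geq 3$, observe that $e' := e_1 + \cdots + e_{n-1}$ is again an idempotent (because $e_i e_j = 0$ for $i \neq j$) and satisfies $e' e_n = e_n e' = 0$. Lemma \ref{l additivity on orthogonal idempotents} then yields $T(e' + e_n) = T(e') + T(e_n)$, and the induction hypothesis gives $T(e') = \sum_{i=1}^{n-1} T(e_i)$, completing $(a)$.

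For $(b)$ the plan is to collapse the full linear combination to the behaviour of $T$ at a single cleverly chosen element, via polynomial interpolation. After dropping any $e_i$ that happens to vanish, I may assume all $e_i$ are nonzero. Fix $n$ pairwise distinct nonzero scalars $\mu_1,\ldots,\mu_n \in \mathbb{C}$ and set $b := \sum_{j=1}^n \mu_j e_j$. Since the $e_j$ are orthogonal idempotents, $b^k = \sum_{j=1}^n \mu_j^k e_j$ for every $k \geq 1$. Lagrange interpolation at the $n+1$ distinct points $\{0, \mu_1, \ldots, \mu_n\}$ furnishes, for each index $i$, a polynomial $p_i \in \mathbb{C}[x]$ with $p_i(0) = 0$ and $p_i(\mu_j) = \delta_{ij}$. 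In particular, $p_i$ has no constant term, so the identity
\begin{equation*}
p_i(b) = \sum_{j=1}^n p_i(\mu_j)\, e_j = e_i
\end{equation*}
is meaningful and valid in $A$ regardless of whether $A$ is unital.

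The final step applies 2-locality at two carefully chosen pairs. For each $i$, the homomorphism $\Phi_{e_i,b}$ satisfies
\begin{equation*}
T(e_i) = \Phi_{e_i,b}(e_i) = \Phi_{e_i,b}(p_i(b)) = p_i(\Phi_{e_i,b}(b)) = p_i(T(b)).
\end{equation*}
Applying the homomorphism $\Phi_{a,b}$ to $a = \sum_i \lambda_i e_i = \sum_i \lambda_i p_i(b)$ and using the same polynomial-preservation, I obtain
\begin{equation*}
T(a) = \Phi_{a,b}(a) = \sum_{i=1}^n \lambda_i\, p_i(\Phi_{a,b}(b)) = \sum_{i=1}^n \lambda_i\, p_i(T(b)) = \sum_{i=1}^n \lambda_i T(e_i),
\end{equation*}
which is $(b)$.

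The main obstacle I anticipate is the possible lack of a unit in $A$, which forbids using constant polynomials; this is exactly why $0$ is included among the interpolation nodes, so that each $p_i$ has vanishing constant term and the identity $p_i(b) = e_i$ is purely algebraic. Once that is arranged, the argument rests on two facts independent of 2-locality: any algebra homomorphism commutes with polynomials having no constant term, and each $e_i$ can be extracted from the single element $b$ by such a polynomial. The 2-local hypothesis then only needs to be invoked at the two pairs $(e_i,b)$ and $(a,b)$.
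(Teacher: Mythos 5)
Your proof is correct. Part $(a)$ is exactly the paper's argument: induction on $n$ with Lemma \ref{l additivity on orthogonal idempotents} supplying both the base case and the inductive step. Part $(b)$, however, takes a genuinely different route. The paper first establishes $T(z)T(e_j)=\lambda_j T(e_j)$ via $\Phi_{z,e_j}$, then writes $z = ze$ with $e=\sum_i e_i$, applies $\Phi_{z,e}$ and part $(a)$ to get $T(z)=T(z)T(e)=\sum_j T(z)T(e_j)=\sum_j \lambda_j T(e_j)$; the argument is purely multiplicative and leans on $(a)$. You instead realize each $e_i$ as $p_i(b)$ for a constant-term-free Lagrange polynomial evaluated at a single generic element $b$ of the span, and push that polynomial identity through $\Phi_{e_i,b}$ and $\Phi_{a,b}$. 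This makes $(b)$ independent of $(a)$ and is essentially the observation (which the paper only exploits later, in Lemma \ref{l linearity on single generated subalgebras}) that $T$ is automatically linear on the non-unital subalgebra generated by a single element; your inclusion of $0$ among the interpolation nodes correctly handles the non-unital case. The only point worth adding is that discarding the vanishing $e_i$ is harmless because $T(0)=0$, which follows from the $1$-homogeneity in Lemma \ref{l basic properties}$(a)$, so the dropped terms contribute nothing to either side of the identity.
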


\begin{proof} We shall prove $(a)$ by induction on $n$. The case $n=1$ is clear, while the case $n=2$ is established in Lemma \ref{l additivity on orthogonal idempotents}. Let $e_1,\ldots, e_n, e_{n+1}$ be idempotents in $A$ satisfying $e_i e_j = e_j e_i =0$ for every $i\neq j$. The elements $e = e_1 +\ldots +e_n$ and $e_{n+1}$ are idempotents in $A$ satisfying $ e e_{n+1} = e_{n+1} e=0$. Lemma \ref{l additivity on orthogonal idempotents} combined with the induction hypothesis assure that $$T\left(\sum_{i=1}^{n+1} e_i \right) = T (e + e_{n+1} ) = T(e) + T(e_{n+1}) = \sum_{i=1}^{n} T(e_i)  + T(e_{n+1}).$$

$(b)$ As in the proof of Lemma \ref{l additivity on orthogonal idempotents}, $\Phi_{a,b}$ will denote a (linear) homomorphism satisfying $T(a) = \Phi_{a,b} (a)$ and $T(b) = \Phi_{a,b}(b)$. Fix $j\in \{ 1,\ldots, n\}$ and set $z= \displaystyle \sum_{i=1}^{n} \lambda_i e_i$. We first calculate \begin{equation}\label{eq 1 lemma 2} T\left(\sum_{i=1}^{n} \lambda_i e_i \right) T(e_j) = \Phi_{z,e_j} \left( \sum_{i=1}^{n} \lambda_i e_i \right) \Phi_{z,e_j} \left( e_j \right)
 \end{equation}$$= \Phi_{z,e_j} \left(\left( \sum_{i=1}^{n} \lambda_i e_i \right) e_j \right) = \Phi_{z,e_j} (\lambda_j e_j ) = \lambda_j T(e_j).$$ To simplify notation, we write $e=\displaystyle \sum_{i=1}^{n} e_i$. In this case $$T\left(\sum_{i=1}^{n} \lambda_i e_i \right) = \Phi_{z,e} \left(\sum_{i=1}^{n}   \lambda_i e_i \right)    = \Phi_{z,e} \left(\left(\sum_{i=1}^{n}  \lambda_i e_i \right) e \right) $$ $$= \Phi_{z,e} \left(\sum_{i=1}^{n}  \lambda_i e_i \right) \Phi_{z,e} \left( e \right)= T \left(\sum_{i=1}^{n}  \lambda_i e_i \right) T \left( e \right)= T \left(\sum_{i=1}^{n}  \lambda_i e_i \right)  T \left(\sum_{j=1}^{n} e_j \right)$$ $$ = \hbox{(by $(a)$)} = T \left(\sum_{i=1}^{n}  \lambda_i e_i \right)  \sum_{j=1}^{n} T(e_j) = \hbox{(by \eqref{eq 1 lemma 2})} = \sum_{j=1}^{n} \lambda_j T(e_j).$$
\end{proof}

\begin{lemma}\label{l complex combinations hermitians}
Let $T: A\to B$ be a (not necessarily linear) 2-local $^*$-homomor-phism between C$^*$-algebras. Then $T (a + i b) = T(a) + i T(b),$ for every $a,b\in A_{sa}$.
\end{lemma}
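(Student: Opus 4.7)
\medskip

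The plan is to fix $h := a + ib$ and, using property $(f)$, first observe that $T(h)^{*} = T(h^{*}) = T(a-ib)$. I want to express both $T(a)$ and $T(b)$ in terms of $T(h)$ and $T(h)^{*}$ and then recombine them to recover $T(h)$. The key obstacle is that, a priori, there is no single $^{*}$-homomorphism that simultaneously agrees with $T$ at the three points $a$, $b$ and $h$, so I will have to extract the information from two \emph{different} 2-local witnesses and use self-adjointness to reconcile them.

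First I would apply the 2-local $^{*}$-homomorphism property to the pair $(a,h)$, producing a $^{*}$-homomorphism $\Psi := \Phi_{a,h}$ with $\Psi(a) = T(a)$ and $\Psi(h) = T(h)$. By $\mathbb{C}$-linearity of $\Psi$,
\[
i\,\Psi(b) \;=\; \Psi(h) - \Psi(a) \;=\; T(h) - T(a).
\]
Since $b = b^{*}$ and $\Psi$ is a $^{*}$-homomorphism, $\Psi(b)$ is self-adjoint; and by $(f)$, $T(a)$ is self-adjoint too. Taking adjoints in the displayed identity and comparing with the original, the self-adjointness of $\Psi(b)$ and $T(a)$ forces
\[
T(a) \;=\; \tfrac{1}{2}\bigl(T(h) + T(h)^{*}\bigr).
\]

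Next I would repeat the same argument with the pair $(b,h)$, letting $\Psi' := \Phi_{b,h}$; by the same self-adjointness trick applied to $\Psi'(a)$ this time, I obtain
\[
T(b) \;=\; \tfrac{1}{2i}\bigl(T(h) - T(h)^{*}\bigr).
\]
(Alternatively, this second identity follows from the first applied to the decomposition $-ih = b + i(-a)$ together with the $1$-homogeneity of $T$ given by $(a)$.)

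Finally, adding these two expressions gives $T(a) + i\,T(b) = T(h)$, which is the claim. The real content of the argument is the second bullet: recognizing that although $\Psi(b)$ need not equal $T(b)$, the mere fact that $\Psi(b)$ must be self-adjoint already pins down $T(a)$ as the ``real part'' of $T(h)$, and symmetrically for $T(b)$.
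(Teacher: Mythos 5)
Your argument is correct and is essentially the paper's own proof: both use the two witnesses $\Phi_{a,a+ib}$ and $\Phi_{b,a+ib}$ together with property $(f)$ (self-adjointness of $T$ on $A_{sa}$) to identify $T(a)$ and $T(b)$ as the real and imaginary parts of $T(a+ib)$. The only difference is cosmetic: the paper phrases the conclusion via $T(a+ib)\pm T(a-ib)$ rather than solving directly for $T(a)$ and $T(b)$.
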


\begin{proof}
Given $a,b\in A_{sa}$, we consider the $^*$-homomorphisms $\Phi_{a,a+ib}$ and $\Phi_{b,a+ib}$. The identities $$T(a+ i b) = \Phi_{a,a+ib} (a) + i \Phi_{a,a+ib} (b)= T(a) +   i \Phi_{a,a+ib} (b),$$ and
$$T(a+i b) = \Phi_{b,a+ib} (a) + i \Phi_{b,a+ib} (b)= \Phi_{b,a+ib} (a) +   i T (b),$$ together with Lemma \ref{l basic properties}$(f)$, imply that $$ T(a+i b) + T(a-i b) = T(a+i b) + T(a+i b)^* = 2 T(a),$$  and $$ 2 i T(b) = T(a+i b) - T(a-i b),$$ which prove $T(a+ ib) = T(a) + i T(b).$
\end{proof}

\begin{theorem}\label{t 2-local hom on von Neumann algebras not containing M_2} Let $\mathcal{M}$ be a von Neumann algebra with no Type $I_2$ direct summand and let $B$ be a C$^*$-algebra. Suppose $T : \mathcal{M} \to B$ is a (not necessarily linear) 2-local $^*$-homomorphism. Then $T$ is linear and a $^*$-homomorphism.
\end{theorem}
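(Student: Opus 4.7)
The plan is to apply the Bunce--Wright--Mackey--Gleason theorem to the restriction of $T$ to the projection lattice of $\mathcal{M}$, obtaining a bounded linear extension $\Psi : \mathcal{M} \to B$, and then to show that $T = \Psi$ via spectral approximation. Multiplicativity will then follow from combining the Jordan identity with a ``commutator'' identity, both derived from the preliminary lemmas.

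First, I would verify that the restriction $T_0$ of $T$ to projections is a bounded, finitely additive $B$-valued measure on the projection lattice of $\mathcal{M}$. Boundedness by $1$ follows from parts $(e)$ and $(g)$ of Lemma \ref{l basic properties} together with $T(0)=0$ (Lemma \ref{l basic properties}$(a)$), while finite additivity on mutually orthogonal projections is exactly Lemma \ref{l additivity on orthogonal idempotents}, since orthogonal projections $p,q$ satisfy $pq=qp=0$. Because $\mathcal{M}$ has no Type $I_2$ direct summand, the vector-valued Bunce--Wright--Mackey--Gleason theorem (\cite{BuWri92,BuWri94}) produces a unique bounded linear map $\Psi : \mathcal{M} \to B$ extending $T_0$.

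Next, I would show $T = \Psi$. For each $a \in \mathcal{M}_{sa}$, the spectral theorem yields a sequence $s_n = \sum_{i=1}^{k_n} \lambda_i^{(n)} p_i^{(n)}$ of finite real linear combinations of pairwise orthogonal spectral projections with $s_n \to a$ in norm. Lemma \ref{l linearity on orthogonal idempotents}$(b)$ together with the definition of $\Psi$ give $T(s_n) = \sum_i \lambda_i^{(n)} T(p_i^{(n)}) = \Psi(s_n)$, and the $1$-Lipschitz property of $T$ (Lemma \ref{l basic properties}$(e)$) together with continuity of $\Psi$ let me pass to the limit to conclude $T(a) = \Psi(a)$. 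Lemma \ref{l complex combinations hermitians} and linearity of $\Psi$ then promote this equality to all of $\mathcal{M}$ via the Cartesian decomposition $a = h+ik$, so $T = \Psi$ is linear.

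Finally, I would extract the multiplicative and $^*$-preserving structure. Lemma \ref{l basic properties}$(f)$ already yields $T(a^*)=T(a)^*$. Polarizing $T(h^2) = T(h)^2$ (Lemma \ref{l basic properties}$(d)$) with the now-established linearity delivers the Jordan identity $T(hk + kh) = T(h)T(k) + T(k)T(h)$ for $h, k \in \mathcal{M}_{sa}$. Expanding $T(aa^*) = T(a)T(a)^*$ (Lemma \ref{l basic properties}$(i)$) with $a = h + ik$, using linearity and $T(h^2) = T(h)^2$, $T(k^2) = T(k)^2$, yields the complementary identity $T(kh - hk) = T(k)T(h) - T(h)T(k)$. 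Adding these two gives $T(hk) = T(h)T(k)$ for all self-adjoint $h, k$, and full multiplicativity on $\mathcal{M}$ follows by linearity.

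The main obstacle is the vector-valued Bunce--Wright--Mackey--Gleason step: it is precisely the exclusion of Type $I_2$ direct summands that licenses the extension of a bounded finitely additive $B$-valued measure on projections to a bounded linear map on the whole von Neumann algebra, bypassing the Gleason-type obstructions arising from $M_2(\mathbb{C})$ factors. Once this extension is in hand, the remaining arguments are routine combinations of the preliminary lemmas, spectral approximation, and polarization.
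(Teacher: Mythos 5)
Your proposal is correct and follows essentially the same route as the paper: a bounded finitely additive measure on $\mathcal{P}(\mathcal{M})$ via Lemmas \ref{l basic properties} and \ref{l additivity on orthogonal idempotents}, the Bunce--Wright--Mackey--Gleason extension (this is exactly where the absence of a Type $I_2$ summand is used), agreement with $T$ on algebraic elements by Lemma \ref{l linearity on orthogonal idempotents}, norm-density plus the $1$-Lipschitz property to get linearity on $\mathcal{M}_{sa}$, and Lemma \ref{l complex combinations hermitians} for full linearity. The only divergence is the last step: the paper concludes that the now-linear $T$ is a $^*$-homomorphism by citing \cite[Corollary 3.6]{Pop} or \cite[Theorem 3.9]{Pe2014}, whereas you derive multiplicativity directly by polarizing $T(a^2)=T(a)^2$ to get the Jordan identity and expanding $T(aa^*)=T(a)T(a)^*$ at $a=h+ik$ to get the commutator identity; this computation is correct (combining the two identities yields $T(hk)=T(h)T(k)$ on self-adjoint elements, hence everywhere by linearity) and makes the final step self-contained rather than outsourced to a citation.
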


\begin{proof} By Lemma \ref{l basic properties}$(e)$,  $T$ is 1-Lipschitzian and hence automatically continuous. Let $\mathcal{P} (\mathcal{M})$ denote the lattice of projections of $\mathcal{M}$ and define a mapping $\mu : \mathcal{P} (\mathcal{M}) \to B$ by $\mu (p ) = T(p),$ for every $p\in \mathcal{P} (\mathcal{M})$. Lemma \ref{l additivity on orthogonal idempotents} implies that $\mu (p + q ) = \mu (p) +\mu (q)$ whenever $p q = 0$ in $\mathcal{P} (\mathcal{M})$, that is, $\mu$ is finitely additive. Furthermore, by Lemma \ref{l basic properties}$(g)$, $T(p)$ is a projection in $B$ for every $p\in \mathcal{P} (\mathcal{M})$. Therefore, $$\|\mu (p) \| = \|T(p)\| \leq 1,$$ for every $p\in \mathcal{P} (\mathcal{M})$.\smallskip

Therefore the above mapping $\mu$ is a bounded $B$-valued finitely additive measure on $\mathcal{P} (\mathcal{M})$. By the Bunce-Wright-Mackey-Gleason theorem (cf. \cite[Theorem A]{BuWri92} or \cite{BuWri94}), there exists a bounded linear operator $G : \mathcal{M} \to B$ satisfying \begin{equation}\label{eq Mackey-Gleaason} T(p) = \mu (p) = G(p), \end{equation}for every $p\in \mathcal{P} (\mathcal{M})$. Let us consider an algebraic element $\displaystyle b = \sum_{i=1}^{n} \lambda_i p_i$ in $\mathcal{M}$, where $\lambda_i\in \mathcal{C}$ and $p_1,\ldots, p_n$ are mutually orthogonal projections in $\mathcal{M}$. Lemma \ref{l linearity on orthogonal idempotents}$(b)$ and \eqref{eq Mackey-Gleaason} assure that $$T(b) = \sum_{i=1}^n \lambda_i T(p_i) = \sum_{i=1}^n \lambda_i G(p_i) = G(b),$$ for every algebraic element $b$ in $\mathcal{M}$.\smallskip

Since every self-adjoint element in a von Neumann algebra can be approximated in norm by algebraic elements, it follows from the continuity of $T$ and $G$ that $T(a ) = G (a)$ for every $a= a^*$ in $\mathcal{M}$ and consequently \begin{equation}\label{eq linearity on hermitian} T(a+b) = G(a+b) = G(a) + G(b) = T(a) +T(b)
\end{equation} for every $a,b\in \mathcal{M}_{sa}$, that is, $T|_{\mathcal{M}_{sa}} : \mathcal{M}_{sa} \to B$ is linear. Lemma \ref{l complex combinations hermitians} implies that $T$ is linear. Finally \cite[Corollary 3.6]{Pop} or \cite[Theorem 3.9]{Pe2014}, imply that the mapping $T$ is a $^*$-homomorphism.
\end{proof}

The case of von Neumann algebras containing a Type $I_2$ direct summand must be treated independently.

\begin{lemma}\label{l 2-local on factors} Let $\mathcal{M}$ be a von Neumann algebra factor, $B$ a C$^*$-algebra, and let $T: \mathcal{M} \to B$ be a (not necessarily linear) 2-local $^*$-homomorphism. Then the following statements hold:\begin{enumerate}[$(a)$] \item If there exists $a\in \mathcal{M}$ with $a \neq 0$ and $T(a)=0$ then $T=0$;
\item If $T\neq 0$ then $T$ is a 2-local $^*$-monomorphism, that is, for every $a,b\in \mathcal{M}$ there exists a $^*$-monomorphism $\Phi_{a,b}$ satisfying $T(a) = \Phi_{a,b} (a)$ and $T(b) = \Phi_{a,b} (b)$;
\item If $T\neq 0$ then $T$ is an isometry, that is, $\|T(a)\| = \| a\|$, for every $a\in \mathcal{M}$.
\end{enumerate}
\end{lemma}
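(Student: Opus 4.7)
The plan is to establish (a) first and to deduce (b) and (c) from it.

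For (a), suppose $T(a)=0$ for some $a\ne 0$. By Lemma \ref{l basic properties}$(i)$, $T(a^*a)=T(a)^*T(a)=0$, so replacing $a$ by $h:=a^*a$ I may assume $h\ge 0$, $h\ne 0$, and $T(h)=0$. I would then produce a nonzero projection killed by $T$: for $0<\varepsilon<\|h\|$ the spectral projection $p:=\chi_{[\varepsilon,\|h\|]}(h)$ is a nonzero element of $\mathcal{M}$ satisfying $hp\ge\varepsilon p$; evaluating $\Phi_{h,p}$ one gets $0=\Phi_{h,p}(h)\Phi_{h,p}(p)=\Phi_{h,p}(hp)\ge \varepsilon T(p)$, and since $T(p)$ is a projection by Lemma \ref{l basic properties}$(g)$, one concludes $T(p)=0$.

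Next I would propagate $T(p)=0$ to every projection of $\mathcal{M}$ via the Murray--von Neumann comparison theorem for the factor $\mathcal{M}$: any projection $q\in\mathcal{M}$ satisfies $q\preceq p$ or $p\preceq q$. In the first case, take a partial isometry $v\in\mathcal{M}$ with $v^*v=q$ and $vv^*\le p$; applying $\Phi_{p,v}$ to $vv^*\le p$ yields $T(v)T(v)^*=\Phi_{p,v}(vv^*)\le\Phi_{p,v}(p)=0$, so $T(v)=0$, and $\Phi_{v,q}$ then produces $T(q)=\Phi_{v,q}(v)^*\Phi_{v,q}(v)=0$. In the second case, decompose $q$ as an orthogonal family of sub-projections each equivalent to a sub-projection of $p$ (possible in a factor) and combine orthogonal additivity from Lemma \ref{l linearity on orthogonal idempotents}$(a)$ with the $1$-Lipschitz continuity of $T$ (Lemma \ref{l basic properties}$(e)$) to conclude $T(q)=0$. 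Once $T$ vanishes on every projection, Lemma \ref{l linearity on orthogonal idempotents}$(b)$ kills every self-adjoint algebraic element; norm density of these in $\mathcal{M}_{sa}$ together with continuity yields $T|_{\mathcal{M}_{sa}}=0$, and Lemma \ref{l complex combinations hermitians} concludes $T\equiv 0$.

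For (b), assume $T\ne 0$ and fix $a,b\in\mathcal{M}$. The given $\Phi_{a,b}$ is a $^*$-homomorphism whose kernel $J$ is a norm-closed two-sided $^*$-ideal of $\mathcal{M}$; I would argue $J=\{0\}$ as follows. If $\Phi_{a,b}\equiv 0$ then $T(a)=T(b)=0$, so the nonvanishing of at least one of $a,b$ together with (a) forces $T=0$, contradicting $T\ne 0$; more generally, a nonzero $x\in J$ combined with the 2-local machinery at the pairs $(a,x),(b,x)$ and part (a) yields the same contradiction. Hence $\Phi_{a,b}$ is a $^*$-monomorphism. Part (c) then follows at once: $^*$-monomorphisms between C$^*$-algebras are isometric, so $\|T(a)\|=\|\Phi_{a,b}(a)\|=\|a\|$ for every $a\in\mathcal{M}$ and any choice of companion $b$.

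The principal obstacle lies in the step of (a) that handles $p\preceq q$ in a properly infinite factor (for instance type $I_\infty$ with $p$ of finite rank), since there $q$ decomposes only as a countably infinite orthogonal sum of sub-projections $\preceq p$ and the combination of finite additivity and norm continuity of $T$ does not deliver $T(q)=0$ immediately. In such cases one must invoke a partial isometry $w\in\mathcal{M}$ with $w^*w=I$, $ww^*=I-p'$ for some $p'\sim p$, and use the identities $T(w)T(w)^*=T(ww^*)$ and $T(w)^*T(w)=T(w^*w)$ of Lemma \ref{l basic properties}$(i)$ to transfer vanishing from a finite sub-projection of $I$ to $I$ itself, after which every projection lies below $I$ and the first case of the dichotomy concludes.
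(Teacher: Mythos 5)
Your treatment of part (a) is sound as far as it goes: the reduction to a positive $h$ via Lemma \ref{l basic properties}$(i)$, the extraction of a spectral projection $p$ with $T(p)=0$, and the transfer of vanishing to any $q\preceq p$ and to finite orthogonal sums of such projections are all correct. But the obstacle you flag in the properly infinite case is not a technicality, and your proposed repair does not close it: with $w^*w=1$ and $ww^*=1-p'$, Lemma \ref{l basic properties}$(i)$ together with Lemma \ref{l additivity on orthogonal idempotents} gives only $T(w)^*T(w)=T(1)$ and $T(w)T(w)^*=T(1-p')=T(1)-T(p')=T(1)$, i.e.\ $T(w)$ is a partial isometry with both supports equal to $T(1)$, which forces nothing. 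In fact no patch can exist, because the statement itself fails there: the quotient map $\pi:B(H)\to B(H)/K(H)$ onto the Calkin algebra is a $^*$-homomorphism, hence a 2-local $^*$-homomorphism with $\Phi_{a,b}=\pi$ for all $a,b$; it annihilates every finite-rank projection yet $\pi\neq 0$, so (a), (b) and (c) all fail for the type $I_\infty$ factor $B(H)$ (and similarly for $II_\infty$). The same example exposes the unjustified step in your part (b): from a nonzero $x\in\ker\Phi_{a,b}$ you cannot extract a contradiction, since $\Phi_{a,b}(x)=0$ gives no control over $T(x)=\Phi_{x,c}(x)$ for the other homomorphisms, and in the Calkin example $\ker\Phi_{a,b}=K(H)$ is always nonzero.

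For comparison, the paper's proof is a one-liner: it asserts that the kernel of any $^*$-homomorphism $\pi:\mathcal{M}\to B$ is a weak$^*$-closed ideal, hence trivial when $\mathcal{M}$ is a factor, so every nonzero $^*$-homomorphism out of $\mathcal{M}$ is a $^*$-monomorphism and an isometry; (a), (b), (c) then follow at once. That assertion is exactly what your spectral and comparison machinery is implicitly trying to reprove, and it founders on the same example (kernels of non-normal $^*$-homomorphisms are only norm-closed): what is really needed is that $\mathcal{M}$ be simple as a C$^*$-algebra, which holds for factors of type $I_n$ with $n<\infty$, type $II_1$ and type $III$, but not for $I_\infty$ or $II_\infty$. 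The lemma is only ever invoked in the paper for the finite-dimensional factor $M_2$, where both the simplicity argument and your comparison argument (which then terminates after finitely many orthogonal pieces) are perfectly correct; restricted to simple factors, the shortest route is the paper's, with ``norm-closed'' in place of ``weak$^*$-closed''.
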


\begin{proof} Since the kernel of every $^*$-homomorphism $\pi: \mathcal{M }\to B$ is a weak$^*$-closed ideal of $\mathcal{M}$, we can easily see that a every non-zero $^*$-homomorphism $\pi: \mathcal{M }\to B$ is a $^*$-monomorphism and an isometry.\smallskip

$(a)$ Suppose $T(a)=0$, for an element $a\in \mathcal{M}\backslash \{0\}$. For each $b$ in $\mathcal{M}$, take a $^*$-homomorphism $\Phi_{a,b}$ satisfying  $0=T(a) = \Phi_{a,b} (a)$ and $T(b) = \Phi_{a,b} (b)$. It follows from the above that $\Phi_{a,b}=0$ and hence $T(b)=0$. The statements $(b)$ and $(c)$ are clear from the above.
\end{proof}

\begin{proposition}\label{p 2-local *-hom on M2} Let $B$ be a C$^*$-algebra and let $T: M_2 (\mathbb{C}) \to B$ be a (not necessarily linear) 2-local $^*$-homomorphism. Then $T$ is linear and a $^*$-homomorphism.
\end{proposition}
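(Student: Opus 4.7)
The plan is to exploit a rigidity property specific to $M_2(\mathbb{C})$: every $^*$-homomorphism $\Psi : M_2(\mathbb{C}) \to B$ is uniquely determined by the single value $\Psi(e_{12})$. Indeed, writing $w = \Psi(e_{12})$, the identities $e_{11} = e_{12} e_{12}^*$, $e_{22} = e_{12}^* e_{12}$, and $e_{21} = e_{12}^*$ force $\Psi(e_{11}) = ww^*$, $\Psi(e_{22}) = w^*w$, and $\Psi(e_{21}) = w^*$; linearity, together with the fact that the four matrix units span $M_2(\mathbb{C})$, pins $\Psi$ down completely. This observation converts the 2-local hypothesis into a global one and substitutes for the Bunce--Wright--Mackey--Gleason machinery that is unavailable in Type $I_2$.

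If $T=0$ the statement is trivial, so I assume $T \neq 0$; Lemma \ref{l 2-local on factors} then guarantees $T(a) \neq 0$ for every non-zero $a$. Setting $v := T(e_{12}) \neq 0$, parts $(g)$, $(h)$, and $(i)$ of Lemma \ref{l basic properties} show that $v$ is a partial isometry in $B$ with
\[
vv^* = T(e_{11}), \qquad v^*v = T(e_{22}), \qquad vv^* \perp v^*v,
\]
so there exists a (unique, non-zero) $^*$-homomorphism $\Phi : M_2(\mathbb{C}) \to B$ determined by $\Phi(e_{12}) = v$.

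To conclude that $T = \Phi$, I would pick, for each $a \in M_2(\mathbb{C})$, a $^*$-homomorphism $\Phi_{e_{12}, a}$ satisfying $\Phi_{e_{12}, a}(e_{12}) = v$ and $\Phi_{e_{12}, a}(a) = T(a)$. Because $v \neq 0$, the map $\Phi_{e_{12}, a}$ is non-zero, and by the rigidity recalled above it must coincide with $\Phi$; hence $T(a) = \Phi_{e_{12}, a}(a) = \Phi(a)$ for every $a$, so $T = \Phi$ is linear and a $^*$-homomorphism. I do not anticipate any real obstacle: once the one-matrix-unit rigidity is in view, the remaining verifications (that $v$ is a partial isometry with orthogonal support projections, and that $\Phi_{e_{12}, a}$ is non-zero) are immediate from the lemmas already established.
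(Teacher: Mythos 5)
Your proposal is correct and is essentially the paper's own argument: the paper likewise exploits that $e_{12}$ generates $M_2(\mathbb{C})$ as a $^*$-algebra, expanding $\Phi_{e_{12},z}(z)$ via Lemma \ref{l basic properties}$(f)$ and $(i)$ to obtain $T(z)=\lambda T(e_{11})+\mu T(e_{12})+\alpha T(e_{21})+\beta T(e_{22})$, which is your rigidity observation written out at the pair $(e_{12},z)$. The only differences are cosmetic: your packaging yields $T=\Phi$ outright (so the $^*$-homomorphism conclusion needs no separate appeal to \cite[Theorem 3.9]{Pe2014}), and the invocation of Lemma \ref{l 2-local on factors} together with the case split $T=0$ is superfluous, since two $^*$-homomorphisms agreeing at $e_{12}$ coincide even when the common value is zero.
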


\begin{proof} To simplify notation, let us write $e_1= \left(
                                                         \begin{array}{cc}
                                                           0 & 1 \\
                                                           0 & 0 \\
                                                         \end{array}
                                                       \right)$, $ e_2= e_1^*= \left(
                                                                          \begin{array}{cc}
                                                                            0 & 0 \\
                                                                            1 & 0 \\
                                                                          \end{array}
                                                                        \right)$, $p_1 = \left(
                                                                                           \begin{array}{cc}
                                                                                             1 & 0 \\
                                                                                             0 & 0 \\
                                                                                           \end{array}
                                                                                         \right)$, and $p_2= \left(
                                                                                                           \begin{array}{cc}
                                                                                                             0 & 0 \\
                                                                                                             0 & 1 \\
                                                                                                           \end{array}
                                                                                                         \right)
                                                                                         $. 
By Lemma \ref{l basic properties}, $T(a^2) = T(a)^2$, $T(a^*) = T(a)^*$ and $T(a)T(a)^* =  T(a a^*),$ and $T(a)^* T(a) = T(a^* a)$, for every $a\in M_2 (\mathbb{C})$. To simplify notation we set $z= \lambda p_1 + \mu e_1 + \alpha e_2 + \beta p_2$.  Considering the $^*$-homomorphisms $\Phi_{e_1,z}$ we have
$$ \lambda T(p_1) + \mu T(e_1) + \alpha T(e_2) + \beta T(p_2) = \lambda T(e_1 e_1^*) + \mu T(e_1) + \alpha T(e_1^*) + \beta T(e_1^* e_1) $$
$$= \lambda T(e_1) T(e_1)^* + \mu T(e_1) + \alpha T(e_1)^* + \beta T(e_1)^* T(e_1)$$
$$= \lambda \Phi_{e_1,z} (e_1) \Phi_{e_1,z} (e_1)^* + \mu \Phi_{e_1,z} (e_1) + \alpha \Phi_{e_1,z} (e_1)^* + \beta \Phi_{e_1,z} (e_1)^* \Phi_{e_1,z} (e_1)$$
$$= \Phi_{e_1,z} ( \lambda e_1 e_1^* + \mu e_1 + \alpha e_1^* + \beta e_1^* e_1)  $$
$$= \Phi_{e_1,z} ( \lambda p_1 + \mu e_1 + \alpha e_2 + \beta p_2 ) = T(\lambda p_1 + \mu e_1 + \alpha e_2 + \beta p_2).$$ Since $\{ p_1, e_1 , e_2, p_2\}$ is a basis of $M_2 (\mathbb{C})$, the above identity shows that $T$ is linear. The proof concludes by \cite[Theorem 3.9]{Pe2014}.
\end{proof}

We shall establish now a strengthened version of Lemma \ref{l linearity on orthogonal idempotents} for 2-local $^*$-homomorphisms. We recall that an element $e$ in a C$^*$-algebra $A$ is said to be a \emph{partial isometry} when $ee^*$ (equivalently, $e^*e$) is a projection. For each partial isometry $e\in A$, the elements $ee^*$ and $e^*e$ are called the left and right support projections of $e$, respectively.

\begin{lemma}\label{l linearity on orthogonal tripotents} Let $T: A\to B$ be a (not necessarily linear) 2-local $^*$-homomor-phism between C$^*$-algebras. Let $e_1,\ldots, e_n$ be a family of mutually orthogonal partial isometries in $A$. Then $$\displaystyle  T\left(\sum_{i=1}^{n} \lambda_i e_i \right) = \sum_{i=1}^{n} \lambda_i T(e_i), $$ for every $\lambda_1, \ldots, \lambda_n\in \mathbb{C}$.
\end{lemma}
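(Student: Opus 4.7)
The plan is to reduce the statement to the projection case already handled in Lemma \ref{l linearity on orthogonal idempotents} by extracting $T(z)$, with $z:=\sum_{i=1}^n\lambda_i e_i$, from products that involve the $T(e_j)$ alongside their adjoints. Denote by $p_i=e_ie_i^*$ and $q_i=e_i^*e_i$ the left and right support projections of $e_i$, and observe that the orthogonality relations $e_ie_j^*=0=e_j^*e_i$ force both $\{p_i\}$ and $\{q_i\}$ to be families of mutually orthogonal projections in $A$. Set $q:=\sum_{i=1}^n q_i$ and note the identity $zq=z$, which follows from $e_ie_i^*e_i=e_i$ together with the orthogonality of the $e_i$.

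The first step is to compute $T(z)T(e_j)^*$ via the $^*$-homomorphism $\Phi_{z,e_j}$ granted by the 2-local hypothesis: expanding $ze_j^*=\lambda_j p_j$ and invoking Lemma \ref{l basic properties}$(i)$ produces $T(z)T(e_j)^*=\lambda_j T(p_j)$. Multiplying on the right by $T(e_j)$ and using an auxiliary instance of the 2-local property at the pair $(p_j,e_j)$ (which gives $T(p_j)T(e_j)=T(e_j)$ since $p_je_j=e_j$), one obtains
$$T(z)T(q_j)=\lambda_j T(e_j).$$
Summing over $j$ and applying Lemma \ref{l linearity on orthogonal idempotents}$(a)$ to the mutually orthogonal projections $q_1,\dots,q_n$, so that $\sum_{j=1}^n T(q_j)=T(q)$, yields $T(z)T(q)=\sum_{j=1}^n\lambda_j T(e_j)$.

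The second step absorbs the factor $T(q)$: feeding the pair $(z,q)$ into the 2-local property and exploiting $zq=z$, one has $T(z)=\Phi_{z,q}(zq)=\Phi_{z,q}(z)\Phi_{z,q}(q)=T(z)T(q)$. Combined with the display from the first step, this delivers the desired identity.

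The main obstacle is the usual one for 2-local problems: since $T$ is not a priori multiplicative, every step that looks multiplicative must be performed by a separately chosen $^*$-homomorphism, and one must orchestrate these choices so that the resulting fragments assemble without needing any relation between distinct $\Phi$'s. The key device is to route the argument through the projection $q$, where Lemma \ref{l linearity on orthogonal idempotents} is already available, and then to re-absorb $T(q)$ into $T(z)$ by a single final application of the 2-local property at the pair $(z,q)$.
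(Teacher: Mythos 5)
Your proof is correct and follows essentially the same route as the paper's: establish $T(z)T(e_j^*e_j)=\lambda_j T(e_j)$ via a $^*$-homomorphism attached to the pair $(z,e_j)$, sum over $j$ using the additivity of $T$ on the mutually orthogonal projections $e_j^*e_j$, and absorb $T(q)$ through $\Phi_{z,q}$ together with $zq=z$. The only (harmless) difference is that you derive the key identity in two stages, inserting an auxiliary homomorphism $\Phi_{p_j,e_j}$ to get $T(p_j)T(e_j)=T(e_j)$, whereas the paper obtains $T(z)T(e_j)^*T(e_j)=\Phi_{z,e_j}(ze_j^*e_j)=\lambda_jT(e_j)$ in a single computation.
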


\begin{proof} Let us note that by Lemma \ref{l basic properties}$(i)$, and since $\displaystyle \sum_{i=1}^{n} e_i$ is a partial isometry, $T(e_i)$, and  $\displaystyle T\left(\sum_{i=1}^{n} e_i\right)$ are partial isometries in $B$. Lemma \ref{l basic properties}$(h)$ assures that $T(e_i) \perp T(e_j)$, for every $i\neq j$, and hence $\displaystyle \sum_{i=1}^{n} T\left(e_i\right)$ also is a partial isometry in $B$ (cf. Lemma \ref{l basic properties}$(i)$).\smallskip

Take $\lambda_1, \ldots, \lambda_n\in \mathbb{C}$ and set $z= \sum_{i=1}^{n} \lambda_i e_i.$ The identity $$ T\left(\sum_{i=1}^{n} \lambda_i e_i \right) T(e_j^* e_j) = T\left(\sum_{i=1}^{n} \lambda_i e_i \right) T(e_j) ^* T(e_j) $$ $$=\Phi_{z,e_j} \left(\sum_{i=1}^{n} \lambda_i e_i \right) \Phi_{z,e_j} (e_j) ^* \Phi_{z,e_j}(e_j) = \lambda_j \Phi_{z,e_j} (e_j e_j^* e_j) $$ $$= \lambda_j \Phi_{z,e_j} (e_j) = \lambda_j T (e_j) , $$ is valid for every $j.$ Since $e_1^* e_1,\ldots , e_n^* e_n$ are mutually orthogonal projections in $A$, Lemma \ref{l additivity on orthogonal idempotents} implies that $$ T\left(\sum_{j=1}^{n} e_j^* e_j \right)  = \sum_{i=1}^{n} T\left( e_j^* e_j \right) .$$ Set $\displaystyle p= \sum_{j=1}^{n} e_j^* e_j $ and consider the $^*$-homomorphism $\Phi_{z,p}$. It follows from the above that $$T\left(\sum_{i=1}^{n} \lambda_i e_i \right)   = T\left( \left(\sum_{i=1}^{n} \lambda_i e_i \right) \left( \sum_{j=1}^{n} e_j^* e_j \right) \right)  $$ $$= \Phi_{z,p} \left( \left(\sum_{i=1}^{n} \lambda_i e_i \right) \left( \sum_{j=1}^{n} e_j^* e_j \right) \right)= \Phi_{z,p} \left( \sum_{i=1}^{n} \lambda_i e_i \right) \Phi_{z,p} \left( \sum_{j=1}^{n} e_j^* e_j \right) $$
$$= T \left( \sum_{i=1}^{n} \lambda_i e_i \right) T \left( \sum_{j=1}^{n} e_j^* e_j \right) = T \left( \sum_{i=1}^{n} \lambda_i e_i \right)  \left( \sum_{j=1}^{n} T(e_j^* e_j) \right) $$ $$= \sum_{j=1}^{n}  T \left( \sum_{i=1}^{n} \lambda_i e_i \right) T(e_j^* e_j) = \sum_{j=1}^{n} \lambda_j T(e_j).$$\end{proof}

Let $a$ be an element in a von Neumann algebra $\mathcal{M}$. Following the notation in \cite[\S 1.10]{Sak}, the least projection $p$ in $\mathcal{M}$ such that $a p = a$ (respectively, $p a = a$) is called the \emph{right support projection} (respectively, the \emph{left support projection}) of $a$ and is denoted by $r(a)$ (respectively, $l(a)$). If $a$ is self-adjoint, $l(a)= r(a)$ is simply called the \emph{support} projection of $a$ and is denoted by $s(a)$.\smallskip

We recall at this point that a mapping $f$ from a C$^*$-algebra $A$ into a Banach space $B$ is said to be \emph{orthogonally additive} if for every $a,b$ in $A$ with $a \perp b$, we have $f(a+b) = f(a) +f(b)$.\smallskip

\begin{proposition}\label{p 2-local is OA} Let $\mathcal{M}$ be a von Neumann algebra, let $B$ be a C$^*$-algebra, and Let $T : \mathcal{M} \to B$ be a (not necessarily linear) 2-local $^*$-homomorphism. Then $T$ is orthogonally additive.
\end{proposition}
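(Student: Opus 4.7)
The plan is to reduce the problem to the reach of Lemma \ref{l linearity on orthogonal tripotents}, by combining polar decomposition with a spectral approximation, and then to pass to the limit using the continuity of $T$ from Lemma \ref{l basic properties}$(e)$.

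Fix $a,b\in\mathcal{M}$ with $a\perp b$, that is, $ab^*=b^*a=0$. First I would translate this into the language of support projections: $ab^*=0$ forces $r(a)r(b)=0$, while $b^*a=0$ forces $l(a)l(b)=0$. Since $\mathcal{M}$ is a von Neumann algebra, the polar decompositions $a=u|a|$ and $b=v|b|$ live inside $\mathcal{M}$, with $u^*u=r(a)$, $uu^*=l(a)$, and analogously for $v$. Writing $u=u\,r(a)$ and $v^*=r(b)\,v^*$ yields $uv^*=u\,r(a)r(b)\,v^*=0$, while writing $u=l(a)u$ and $v^*=v^*l(b)$ yields $v^*u=v^*\,l(b)l(a)\,u=0$. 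Hence $u$ and $v$ are themselves orthogonal partial isometries.

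By the spectral theorem inside $\mathcal{M}$, I would next approximate $|a|$ and $|b|$ in norm by finite positive combinations of mutually orthogonal spectral projections,
\[
s_n=\sum_{i=1}^{N_n}\alpha_i^{(n)} p_i^{(n)}\longrightarrow |a|, \qquad t_n=\sum_{j=1}^{M_n}\beta_j^{(n)} q_j^{(n)}\longrightarrow |b|,
\]
with the $p_i^{(n)}$ mutually orthogonal subprojections of $r(a)$ and the $q_j^{(n)}$ mutually orthogonal subprojections of $r(b)$. Setting $a_n=u s_n=\sum_i \alpha_i^{(n)}(u p_i^{(n)})$ and $b_n=v t_n=\sum_j \beta_j^{(n)}(v q_j^{(n)})$, each $u p_i^{(n)}$ (resp.\ $v q_j^{(n)}$) is a partial isometry whose right support sits below $r(a)$ (resp.\ $r(b)$) and whose left support $u p_i^{(n)} u^*$ (resp.\ $v q_j^{(n)} v^*$) sits below $l(a)$ (resp.\ $l(b)$). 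Combined with $r(a)r(b)=l(a)l(b)=0$ and the mutual orthogonality inside each sub-family, this makes the whole collection $\{u p_i^{(n)}\}_i\cup\{v q_j^{(n)}\}_j$ a family of pairwise orthogonal partial isometries.

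Applying Lemma \ref{l linearity on orthogonal tripotents} to this combined family and then to each sub-family separately,
\[
T(a_n+b_n)=\sum_i \alpha_i^{(n)}T(u p_i^{(n)})+\sum_j \beta_j^{(n)}T(v q_j^{(n)})=T(a_n)+T(b_n).
\]
Since $\|u\|,\|v\|\leq 1$ ensures $a_n\to a$ and $b_n\to b$ in norm, and $T$ is $1$-Lipschitz by Lemma \ref{l basic properties}$(e)$, letting $n\to\infty$ concludes $T(a+b)=T(a)+T(b)$. The step I expect to be the principal obstacle is the opening one: extracting from the C$^*$-algebraic orthogonality $a\perp b$ the partial-isometry orthogonality $u\perp v$ of the polar parts, which is precisely what allows the spectral pieces $\{u p_i^{(n)}\}$ and $\{v q_j^{(n)}\}$ to assemble into a single orthogonal family visible to Lemma \ref{l linearity on orthogonal tripotents}. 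After that, the argument is just bookkeeping on the spectral approximations plus an appeal to continuity.
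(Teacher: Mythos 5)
Your argument is correct, and it reaches the conclusion by a more elementary route than the paper. Both proofs begin the same way: polar decompositions $a=u|a|$, $b=v|b|$ inside $\mathcal{M}$ and the observation that $a\perp b$ forces $r(a)r(b)=l(a)l(b)=0$, hence $u\perp v$ (the paper asserts $|a|\perp|b|$ and $u_a\perp u_b$ without proof; your derivation via $ab^*=0\Rightarrow |a|\,b^*=0\Rightarrow r(a)r(b)=0$ is the standard one and is sound). From there the paper introduces the abelian von Neumann algebra $\mathcal{M}_{|a|}\oplus^{\infty}\mathcal{M}_{|b|}$ and the map $\Psi(x+y)=u_a x+u_b y$, checks via Lemma \ref{l linearity on orthogonal tripotents} that $p\mapsto T\Psi(p)$ is a bounded finitely additive measure on its projection lattice, and invokes the Bunce--Wright--Mackey--Gleason theorem to produce a bounded linear $G$ agreeing with $T\circ\Psi$ on projections; spectral approximation and $1$-Lipschitz continuity then give additivity of $T\circ\Psi$ on self-adjoint elements, and taking $z_1=|a|$, $z_2=|b|$ finishes. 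You bypass the Bunce--Wright--Mackey--Gleason step entirely: because $|a|$ and $|b|$ are orthogonal positive elements, they can be simultaneously approximated in norm by combinations $\sum_i\alpha_i^{(n)}p_i^{(n)}$ and $\sum_j\beta_j^{(n)}q_j^{(n)}$ of spectral projections for which the whole family $\{up_i^{(n)}\}\cup\{vq_j^{(n)}\}$ consists of mutually orthogonal partial isometries, so Lemma \ref{l linearity on orthogonal tripotents}, applied to the combined family and to each sub-family, yields $T(a_n+b_n)=T(a_n)+T(b_n)$ directly, and Lemma \ref{l basic properties}$(e)$ passes to the limit. What the paper's detour buys is the stronger intermediate fact that $T\circ\Psi$ is linear on all of $\left(\mathcal{M}_{|a|}\oplus^{\infty}\mathcal{M}_{|b|}\right)_{sa}$, together with a template uniform with Theorem \ref{t 2-local hom on von Neumann algebras not containing M_2}, where the measure-extension theorem genuinely cannot be avoided; for orthogonal additivity itself your direct argument suffices and is leaner.
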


\begin{proof} Let $a$ and $b$ be two orthogonal elements in $\mathcal{M}$. We consider the polar decompositions of $a$ and $b$ in the form $a = u_a |a|$ and $b =u_b |b| $, where $|a| = (a^* a)^{\frac12}$, $|b| = (b^* b)^{\frac12}$, $u_a$ and $u_b$ are partial isometries in $\mathcal{M}$ with $u_a^* u_a = s(|a|)$ and $u_b^* u_b = s(|b|)$ (cf. \cite[\S 1.12]{Sak}). Since $a\perp b$ we have $|a| \perp |b|$, $u_a \perp u_b$ and $| a+b| = |a|+ |b|$.\smallskip

Let $\mathcal{M}_{|a|}$, $\mathcal{M}_{|b|}$ and $\mathcal{M}_{\{|a|,|b|\}}$ denote the von Neumann subalgebras of $\mathcal{M}$ generated by $|a|, |b|$ and $\{|a|,|b|\}$, respectively. Since $\mathcal{M}_{|a|}$ and $\mathcal{M}_{|b|}$ are abelian von Neumann algebras and $\mathcal{M}_{\{|a|,|b|\}}$ coincides with the orthogonal sum $ \mathcal{M}_{|a|} \oplus^{\infty} \mathcal{M}_{|b|}$, we deduce that $\mathcal{M}_{\{|a|,|b|\}}$ is an abelian von Neumann algebra. We define a bounded linear operator $\Psi : \mathcal{M}_{|a|} \oplus^{\infty} \mathcal{M}_{|b|} \to \mathcal{M}$ given by $$\Psi (x+y) := u_a x + u_b y, \ (x\in \mathcal{M}_{|a|}, y \in \mathcal{M}_{|b|}).$$

We consider the mapping $T\circ \Psi : \mathcal{M}_{|a|} \oplus^{\infty} \mathcal{M}_{|b|} \to B$. We observe that, for each projection $p$ in $\mathcal{M}_{|a|} \oplus^{\infty} \mathcal{M}_{|b|}$, $\Psi (p)$ is a partial isometry in $M$, and hence $T\circ\Psi (p)$ also is a partial isometry in $B$ (compare Lemma \ref{l basic properties}$(i)$ or the first part in the proof of Lemma \ref{l linearity on orthogonal tripotents}), which gives $\| T\circ\Psi (p) \|\leq 1$, for every projection $p$ in $\mathcal{M}_{|a|} \oplus^{\infty} \mathcal{M}_{|b|}$.\smallskip

Let $p_1,\ldots, p_n$ be mutually orthogonal projections in $\mathcal{M}_{|a|} \oplus^{\infty} \mathcal{M}_{|b|}$. Since $\Psi(p_1),$ $\ldots,$ $ \Psi(p_n)$ are mutually orthogonal partial isometries in $\mathcal{M}$, Lemma \ref{l linearity on orthogonal tripotents} proves that $$T\circ \Psi\left(\sum_{i=1}^{n} p_i \right) = T\left(\sum_{i=1}^{n} \Psi(p_i) \right) =\sum_{i=1}^{n}  T\left(\Psi(p_i) \right).$$ That is, $T\circ \Psi : \mathcal{P}\left(\mathcal{M}_{|a|} \oplus^{\infty} \mathcal{M}_{|b|}\right) \to B$, $p \mapsto T\Psi (p)$, is a bounded $B$-valued finitely additive measure on the set $\mathcal{P} (\mathcal{M}_{|a|} \oplus^{\infty} \mathcal{M}_{|b|})$ of all projections in $\mathcal{M}_{|a|} \oplus^{\infty} \mathcal{M}_{|b|}$. Since $\mathcal{M}_{|a|} \oplus^{\infty} \mathcal{M}_{|b|}$ is an abelian von Neumann algebra, it follows from the Bunce-Wright-Mackey-Gleason theorem (cf. \cite[Theorem A]{BuWri92} or \cite{BuWri94}), that there exists a bounded linear operator $G : \mathcal{M}_{|a|} \oplus^{\infty} \mathcal{M}_{|b|} \to B$ satisfying $$ T\circ \Psi (p) = G(p), $$ for every $p\in \mathcal{P} (\mathcal{M}_{|a|} \oplus^{\infty} \mathcal{M}_{|b|})$. The continuity argument applied in the proof of Theorem \ref{t 2-local hom on von Neumann algebras not containing M_2} shows that $$T\circ \Psi (z) = G(z),$$ for every $z= z^*$ in $\mathcal{M}_{|a|} \oplus^{\infty} \mathcal{M}_{|b|}$, and then $$T\circ \Psi (z_1+z_2) = G(z_1+ z_2) = G(z_1) + G(z_2) = T\circ \Psi (z_1)+ T\circ \Psi (z_2),$$ for every $z_1,z_1 \in \left( \mathcal{M}_{|a|} \oplus^{\infty} \mathcal{M}_{|b|} \right)_{sa}$. Taking $z_1 = |a|$ and $z_2 = |b|$ we obtain $T(a+b) = T(a)+ T(b).$
\end{proof}

By a simple induction argument, combined with Proposition \ref{p 2-local is OA}, we get:

\begin{corollary}\label{c stability for ell infty sums} Let $\left(\mathcal{M}_i\right)_{i=1}^{n}$ be a finite family of von Neumann algebras and let $B$ be a C$^*$-algebra. Suppose that, for every $i$, every 2-local $^*$-homomorphism $T: \mathcal{M}_i \to B$ is linear. Then every 2-local $^*$-homomorphism $\displaystyle T: \bigoplus_{i=1,\ldots,n}^{\ell_{\infty}} \mathcal{M}_i \to B$ is linear. $\hfill \Box$
\end{corollary}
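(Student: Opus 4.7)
The strategy is induction on $n$, with Proposition \ref{p 2-local is OA} doing the work at each step. The case $n=1$ is the hypothesis, so I assume the conclusion for $n-1$ summands and aim to prove it for $n$ summands. Write $\mathcal{M}=\mathcal{M}_1\oplus^{\infty}\mathcal{N}$, where $\mathcal{N}=\bigoplus_{i=2,\ldots,n}^{\ell_{\infty}}\mathcal{M}_i$, and identify each $\mathcal{M}_1,\mathcal{N}$ with its natural (unital over its unit, but not over the unit of $\mathcal{M}$) C$^*$-subalgebra of $\mathcal{M}$ via $a\mapsto (a,0)$ and $b\mapsto (0,b)$.

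The first observation is that the restrictions $T|_{\mathcal{M}_1}$ and $T|_{\mathcal{N}}$ are themselves 2-local $^*$-homomorphisms into $B$. Indeed, given $a,b\in \mathcal{M}_1$, the global 2-local property furnishes a $^*$-homomorphism $\Phi_{a,b}:\mathcal{M}\to B$ with $\Phi_{a,b}(a)=T(a)$ and $\Phi_{a,b}(b)=T(b)$; its composition with the canonical embedding $\mathcal{M}_1\hookrightarrow \mathcal{M}$ is a $^*$-homomorphism $\mathcal{M}_1\to B$ witnessing the 2-local property of $T|_{\mathcal{M}_1}$, and similarly for $\mathcal{N}$. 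By hypothesis on $\mathcal{M}_1$ and by the inductive hypothesis applied to $\mathcal{N}$, both restrictions are linear.

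The second observation is that every $z\in \mathcal{M}$ decomposes uniquely as $z=z_1+z_2$ with $z_1\in \mathcal{M}_1$, $z_2\in \mathcal{N}$, and this decomposition satisfies $z_1\perp z_2$ in the C$^*$-sense ($z_1 z_2^{*}=z_2^{*}z_1=0$). Proposition \ref{p 2-local is OA} therefore gives $T(z)=T(z_1)+T(z_2)$. Combining this with the linearity of $T|_{\mathcal{M}_1}$ and $T|_{\mathcal{N}}$, for any $x=x_1+x_2$ and $y=y_1+y_2$ in $\mathcal{M}$,
\[
T(x+y)=T((x_1+y_1)+(x_2+y_2))=T(x_1+y_1)+T(x_2+y_2)
\]
\[
=T(x_1)+T(y_1)+T(x_2)+T(y_2)=T(x)+T(y),
\]
so $T$ is additive. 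Since Lemma \ref{l basic properties}$(a)$ already supplies $1$-homogeneity over $\mathbb{C}$, additivity yields full linearity, completing the inductive step.

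No real obstacle is expected; the only point to check carefully is that restrictions of the witnessing $^*$-homomorphisms to a summand remain $^*$-homomorphisms, which is immediate from the fact that each $\mathcal{M}_i$ sits inside $\mathcal{M}$ as a (non-unital) C$^*$-subalgebra. The key ingredient doing the work is orthogonal additivity from Proposition \ref{p 2-local is OA}, which lets additivity across the direct-sum decomposition be reduced to additivity inside each factor.
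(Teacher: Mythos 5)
Your proof is correct and follows exactly the route the paper intends: the paper states this corollary with no written proof beyond the remark that it follows ``by a simple induction argument, combined with Proposition \ref{p 2-local is OA},'' and your argument --- restricting $T$ to the summands (noting the restrictions are again 2-local $^*$-homomorphisms), using orthogonal additivity of Proposition \ref{p 2-local is OA} across the $\ell_\infty$-decomposition, and invoking $1$-homogeneity from Lemma \ref{l basic properties}$(a)$ --- is precisely that argument, carefully spelled out.
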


\begin{corollary}\label{c type I2} Every (not necessarily linear) 2-local $^*$-homomorphism from a Type $I_2$ von Neumann algebra into a C$^*$-algebra is linear and a $^*$-homomorphism.
\end{corollary}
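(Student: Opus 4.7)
The plan is to reduce Type $I_2$ to finite $\ell_\infty$-direct sums of $M_2(\mathbb{C})$, which are already handled by Proposition \ref{p 2-local *-hom on M2} and Corollary \ref{c stability for ell infty sums}, via a joint step-function approximation in the centre. Write $\mathcal{M} \cong M_2(Z)$ for an abelian von Neumann algebra $Z$. By Lemma \ref{l basic properties}$(a)$ and Lemma \ref{l complex combinations hermitians} it suffices to show that $T$ is additive on $\mathcal{M}_{sa}$; once linearity is in hand, \cite[Theorem 3.9]{Pe2014} concludes that $T$ is a $^*$-homomorphism.

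Given $a,b \in \mathcal{M}_{sa}$, the entries appearing in the $2\times 2$ block decompositions of $a$ and $b$ all belong to the abelian von Neumann algebra $Z$ and, after separating the real and imaginary parts of the off-diagonal entries, reduce to finitely many self-adjoint elements of $Z$. Applying the spectral theorem in $Z$ jointly to these elements and taking a common refinement produces, for every $\vr > 0$, mutually orthogonal central projections $z_1,\ldots,z_n \in Z$ with $\sum_{j=1}^{n} z_j = 1$ and scalar self-adjoint matrices $m_j^a, m_j^b \in M_2(\mathbb{C})_{sa}$ such that the step functions $\tilde a := \sum_{j=1}^{n} z_j m_j^a$ and $\tilde b := \sum_{j=1}^{n} z_j m_j^b$ satisfy $\|a - \tilde a\| + \|b - \tilde b\| < \vr$. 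Both $\tilde a$ and $\tilde b$ lie in the subalgebra $\mathcal{N} := \sum_{j=1}^{n} z_j M_2(\mathbb{C}) \subset \mathcal{M}$, which, as the $z_j$ are mutually orthogonal, is $^*$-isomorphic to $\bigoplus_{j=1}^{n} M_2(\mathbb{C})$.

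The restriction to $\mathcal{N}$ of any $^*$-homomorphism $\Phi_{x,y}:\mathcal{M} \to B$ is again a $^*$-homomorphism agreeing with $T$ at the points $x, y \in \mathcal{N}$, so $T|_\mathcal{N}$ is a 2-local $^*$-homomorphism. Proposition \ref{p 2-local *-hom on M2} combined with Corollary \ref{c stability for ell infty sums} then forces $T|_\mathcal{N}$ to be linear, whence $T(\tilde a + \tilde b) = T(\tilde a) + T(\tilde b)$. Letting $\vr \to 0$ and invoking the $1$-Lipschitz continuity of $T$ from Lemma \ref{l basic properties}$(e)$ yields $T(a+b) = T(a) + T(b)$. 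The main technical point I expect is the step-function approximation, namely producing a single refinement of central projections that simultaneously approximates every self-adjoint entry of $a$ and $b$; once $\tilde a$ and $\tilde b$ sit in a common finite-dimensional $\ell_\infty$-sum of copies of $M_2(\mathbb{C})$, the remaining steps are a routine continuity argument.
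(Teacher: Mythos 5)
Your proposal is correct and follows essentially the same route as the paper: both reduce to subalgebras of the form $\bigoplus^{\ell_{\infty}} M_2(\mathbb{C})$ determined by a finite central partition of unity, apply Proposition \ref{p 2-local *-hom on M2} together with Corollary \ref{c stability for ell infty sums}, and conclude by the $1$-Lipschitz property from Lemma \ref{l basic properties}$(e)$. The only difference is cosmetic: you spell out the joint step-function approximation of the central entries explicitly, whereas the paper delegates it to a citation of \cite[Lemma 8.3]{Mae}.
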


\begin{proof} Let $\mathcal{M}$ be a Type $I_2$ von Neumann algebra and let $T : \mathcal{M} \to B$ be a 2-local $^*$-homomorphism from $\mathcal{M}$ into a C$^*$-algebra. By standard classification theory of von Neumann algebras (see, for example, \cite[Theorem 2.3.3]{Sak}) we may suppose that $$ \mathcal{M} = C(K) \otimes M_2(\mathbb{C}),$$ where $C(K)$ is the algebra of all continuous functions on a compact Stonean space $K$.\smallskip

Let $p_1,\ldots, p_m$ be mutually orthogonal projections in $C(K)$ with $p_1+ \ldots +p_m=1$. The von Neumann subalgebra $$\mathcal{M}_{p_1,\ldots, p_m}=p_1\otimes M_2 (\mathbb{C}) \oplus \ldots \oplus p_m\otimes M_2 (\mathbb{C})$$ is C$^*$-isomorphic to the $\ell_{\infty}$-sum $\displaystyle \bigoplus_{i=1,\ldots, m}^{\ell_{\infty}} M_2(\mathbb{C})$. Since the restricted mapping $T|_{\mathcal{M}_{p_1,\ldots, p_m}}: \mathcal{M}_{p_1,\ldots, p_m} \to B$ is a 2-local $^*$-homomorphism, we deduce, via Proposition \ref{p 2-local *-hom on M2} and Corollary \ref{c stability for ell infty sums}, that $T|_{\mathcal{M}_{p_1,\ldots, p_m}}$ is linear. Fix  $x,y\in \mathcal{M}$. By standard arguments (compare \cite[Lemma 8.3]{Mae}), for each $\varepsilon >0$, there exist a subalgebra of the form $\mathcal{M}_{p_1,\ldots, p_m}$, $x_\varepsilon, y_{\varepsilon}\in \mathcal{M}_{p_1,\ldots, p_m}$ such that $\|x-x_{\varepsilon}\| < \frac{\varepsilon}{4}$, and $\|y-y_{\varepsilon}\| < \frac{\varepsilon}{4}$. Then, by Lemma \ref{l basic properties}$(e)$, $$\|T(x+y)-T(x)-T(y)\| \leq \|T(x+y) -T(x_{\varepsilon}+y_{\varepsilon})\| + \|T(x_{\varepsilon})- T(x) \|$$ $$ +\|T(y_{\varepsilon})-T(y) \| < \| (x+y) -(x_{\varepsilon}+y_{\varepsilon})\| + \|x_{\varepsilon}- x \| +\|y_{\varepsilon}-y \|< \varepsilon. $$ Since $\varepsilon$ was arbitrarily chosen, we get $T(x+y ) = T(x) + T(y).$\end{proof}

The main result of this section is a consequence of Theorem \ref{t 2-local hom on von Neumann algebras not containing M_2}, Corollary \ref{c type I2} and Corollary \ref{c stability for ell infty sums}.

\begin{theorem}\label{t 2-local hom on von Neumann algebras} Every (not necessarily linear) 2-local $^*$-homomorphism from a von Neumann algebra into a C$^*$-algebra is linear and a $^*$-homomorphism.
\end{theorem}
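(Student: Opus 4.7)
The plan is to assemble the result from the three prior ingredients: Theorem \ref{t 2-local hom on von Neumann algebras not containing M_2} (the no-Type-$I_2$ case), Corollary \ref{c type I2} (the Type-$I_2$ case), and Corollary \ref{c stability for ell infty sums} (stability under finite $\ell_\infty$-sums). Given an arbitrary von Neumann algebra $\mathcal{M}$ and a 2-local $^*$-homomorphism $T:\mathcal{M}\to B$, the first step is to invoke the standard type decomposition of von Neumann algebras to write
\[
\mathcal{M} = \mathcal{M}_1 \oplus^{\infty} \mathcal{M}_2,
\]
where $\mathcal{M}_1$ has no Type $I_2$ direct summand and $\mathcal{M}_2$ is of Type $I_2$ (allowing either summand to be trivial).

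Next, I would observe that the restriction of $T$ to each summand is again a 2-local $^*$-homomorphism. More precisely, for $i=1,2$, let $\iota_i: \mathcal{M}_i \hookrightarrow \mathcal{M}$ be the canonical embedding and set $T_i := T\circ \iota_i : \mathcal{M}_i \to B$. Given $a,b\in \mathcal{M}_i$, the $^*$-homomorphism $\Phi_{\iota_i(a),\iota_i(b)}\circ \iota_i$ implements the 2-local property for $T_i$ at $(a,b)$. Applying Theorem \ref{t 2-local hom on von Neumann algebras not containing M_2} to $T_1$ and Corollary \ref{c type I2} to $T_2$ yields that each $T_i$ is linear (and, in fact, a $^*$-homomorphism).

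To pass from linearity on each summand back to linearity on the full algebra $\mathcal{M} = \mathcal{M}_1 \oplus^{\infty} \mathcal{M}_2$, I would apply Corollary \ref{c stability for ell infty sums}, whose hypothesis is exactly that every 2-local $^*$-homomorphism from each $\mathcal{M}_i$ into $B$ is linear. This gives linearity of $T$ itself. Finally, once $T$ is known to be linear, the fact that it is a $^*$-homomorphism follows from the linear version of the result (e.g. \cite[Corollary 3.6]{Pop} or \cite[Theorem 3.9]{Pe2014}), which was already invoked at the end of the proof of Theorem \ref{t 2-local hom on von Neumann algebras not containing M_2}.

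I do not expect any genuine obstacle at this final stage: the real technical work has been done in the three ingredients, and the only thing to verify carefully is the almost tautological observation that restriction to a weak$^*$-closed direct summand preserves the 2-local $^*$-homomorphism property. The decomposition into a no-Type-$I_2$ part and a Type-$I_2$ part is a standard consequence of the classification of von Neumann algebras, so the entire argument is a short assembly of the previously established pieces.
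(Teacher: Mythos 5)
Your proposal is correct and follows essentially the same route as the paper: decompose $\mathcal{M}$ as the $\ell_\infty$-sum of a summand with no Type $I_2$ part and a Type $I_2$ summand, apply Theorem \ref{t 2-local hom on von Neumann algebras not containing M_2} and Corollary \ref{c type I2} to the restrictions, and glue with Corollary \ref{c stability for ell infty sums}. Your explicit check that restriction to a direct summand preserves the 2-local $^*$-homomorphism property, and the final appeal to the linear case to upgrade from linearity to $^*$-homomorphism, are details the paper leaves implicit but are exactly what is needed.
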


\begin{proof} Let $\mathcal{M}$ be a von Neumann algebra and let $T$ be a 2-local $^*$-homomor-phism from $\mathcal{M}$ into a C$^*$-algebra $B$. By \cite[Proposition 2.2.10 and Theorem 2.3.2]{Sak}, $\mathcal{M}$ decomposes as the $\ell_{\infty}$-sum of two von Neumann algebras $\mathcal{M}_1$ and $\mathcal{M}_2$, where $\mathcal{M}_1$ contains no Type $I_2$ direct summand and $\mathcal{M}_2$ is a Type $I_2$ von Neumann algebra. Theorem \ref{t 2-local hom on von Neumann algebras not containing M_2} and Corollary \ref{c type I2}, $T|_{\mathcal{M}_1}$ and $T|_{\mathcal{M}_2}$ are linear. The linearity of $T$ follows from Corollary \ref{c stability for ell infty sums}.
\end{proof}

We can rediscover now some of the results commented at the introduction.

\begin{corollary}\label{c 2-local *auto von Neumann} Every (not necessarily linear) surjective 2-local $^*$-automor-phism on a von Neumann algebra is a $^*$-automorphism.$\hfill\Box$
\end{corollary}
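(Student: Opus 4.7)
The corollary follows almost immediately from Theorem \ref{t 2-local hom on von Neumann algebras} together with an easy injectivity argument. My plan is as follows.

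First, I would observe that a 2-local $^*$-automorphism on a von Neumann algebra $\mathcal{M}$ is, in particular, a 2-local $^*$-homomorphism from $\mathcal{M}$ into itself (a $^*$-automorphism is a $^*$-homomorphism). So Theorem \ref{t 2-local hom on von Neumann algebras} applies directly and yields that $T:\mathcal{M}\to\mathcal{M}$ is linear and a $^*$-homomorphism.

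Next, I would show that $T$ is injective. For each $a\in\mathcal{M}$, the hypothesis guarantees a $^*$-automorphism $\Phi_{a,a}:\mathcal{M}\to\mathcal{M}$ with $T(a)=\Phi_{a,a}(a)$. Since $\Phi_{a,a}$ is a $^*$-automorphism, it is in particular injective; thus $a\neq 0$ implies $T(a)=\Phi_{a,a}(a)\neq 0$. Hence $T$ is an injective linear $^*$-homomorphism.

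Finally, combining injectivity with the assumed surjectivity, $T$ is a bijective linear $^*$-homomorphism from $\mathcal{M}$ onto itself, that is, a $^*$-automorphism. There is no substantial obstacle here; the entire content is already contained in Theorem \ref{t 2-local hom on von Neumann algebras}, and the only extra ingredient is the one-line injectivity check made possible by the fact that each local $\Phi_{a,b}$ is itself an automorphism rather than merely a homomorphism.
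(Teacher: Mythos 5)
Your proposal is correct and follows exactly the route the paper intends: the corollary is stated with no written proof precisely because it is the immediate combination of Theorem \ref{t 2-local hom on von Neumann algebras} with the observation that each local map $\Phi_{a,b}$ is bijective (giving injectivity of $T$) and the assumed surjectivity. Nothing is missing.
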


\section{2-local $^*$-homomorphisms on dual C$^*$-algebras}

A projection $p$ in a C$^*$-algebra $A$ is said to be
\emph{minimal} if $pAp = \mathbb{C} p.$ A partial isometry $e$ in
$A$ is said to be minimal if $e e^*$ (equivalently, $e^* e$) is a
minimal projection. The \emph{socle} of $A$, soc$(A)$, is defined
as the linear span of all minimal projections in $A$. The
\emph{ideal of compact elements} in $A$, $K(A)$, is defined as the norm
closure of soc$(A)$. A C$^*$-algebra is said to be \emph{dual} or
\emph{compact} if $A= K(A)$. 
We refer to \cite[\S 2]{Kap},
\cite{Alex} and \cite{Yli} for the basic references on dual
C$^*$-algebras.\smallskip

Following standard notation, for each hermitian element $h$ in a C$^*$-algebra $A$, the symbol $A_{h}$ will denote the C$^*$-subalgebra of $A$ generated by $h$.

\begin{lemma}\label{l linearity on single generated subalgebras}
Let $T: A\to B$ be a (not necessarily linear) 2-local $^*$-homomor-phism between C$^*$-algebras. Then, for each $h\in A_{sa}$, $T|_{A_{h}} : A_{h} \to B$ is a linear mapping.
\end{lemma}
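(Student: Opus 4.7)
The plan is to exploit the naturality of continuous functional calculus under $^*$-homomorphisms. Every element $a\in A_h$ has the form $a=f(h)$ for some continuous function $f$ on $\sigma(h)$ (vanishing at $0$ whenever $0\in\sigma(h)$, so that the representation respects the Gelfand picture $A_h\cong C_0(\sigma(h)\setminus\{0\})$). The idea is to choose, for this specific $a$, the $^*$-homomorphism $\Phi_{h,a}$ provided by the 2-local hypothesis, which simultaneously satisfies $\Phi_{h,a}(h)=T(h)$ and $\Phi_{h,a}(a)=T(a)$. Together these will force a single explicit formula for $T(a)$ in terms of $T(h)$.

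First I would observe that $T(h)$ is self-adjoint in $B$, since Lemma \ref{l basic properties}$(f)$ gives $T(h)^{*}=T(h^{*})=T(h)$, so the continuous functional calculus at $T(h)$ is available. Since any $^*$-homomorphism between C$^*$-algebras is continuous and intertwines continuous functional calculus (verify on polynomials, then pass to the norm limit via Stone--Weierstrass), applying $\Phi_{h,a}$ to the identity $a=f(h)$ yields
\[
T(a)\;=\;\Phi_{h,a}(a)\;=\;\Phi_{h,a}(f(h))\;=\;f\!\left(\Phi_{h,a}(h)\right)\;=\;f(T(h)).
\]
Thus $T|_{A_h}$ coincides with the composition of the Gelfand isomorphism $a\mapsto f$ with the continuous functional calculus map $f\mapsto f(T(h))$, both of which are linear.

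Concretely, given $a=f(h)$, $b=g(h)$ in $A_h$ and $\lambda,\mu\in\mathbb{C}$, we have $\lambda a+\mu b=(\lambda f+\mu g)(h)$, so the displayed formula gives
\[
T(\lambda a+\mu b)\;=\;(\lambda f+\mu g)(T(h))\;=\;\lambda f(T(h))+\mu g(T(h))\;=\;\lambda T(a)+\mu T(b).
\]
The only mild subtlety to watch is that $\Phi_{h,a}$ need not be unital, so $\sigma(T(h))$ could pick up an extra $0$; but any $f$ arising from $A_h$ vanishes at $0$ in that case, so $f(T(h))$ is unambiguous. I expect no real obstacle: once the pair $(h,a)$ is inserted into the 2-local condition, the argument collapses to a single application of functional calculus.
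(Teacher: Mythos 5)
Your proof is correct and takes essentially the same route as the paper's: both insert the pair $(h,a)$ into the $2$-local hypothesis and use that the $^*$-homomorphism $\Phi_{h,a}$ intertwines functional calculus at $h$, so that $T$ restricted to $A_h$ is realized by the (linear) functional calculus at $T(h)$. The only difference is cosmetic: the paper verifies the identity on polynomials $\sum_k \alpha_k h^k$ and then extends using the $1$-Lipschitz continuity of $T$, whereas you pass directly to continuous functions vanishing at $0$.
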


\begin{proof} 
Consider an element $b\in A_{h}$ of the form $\displaystyle b= \sum_{k=1}^{m} \alpha_k h^k$ and the $^*$-homomorphism $\Phi_{h,b}$. In this case, $$T(b) = \Phi_{h,b} \left(\sum_{k=1}^{m} \alpha_k h^k\right) = \sum_{k=1}^{m} \alpha_k \Phi_{h,b} \left( h\right)^k = \sum_{k=1}^{m} \alpha_k T \left( h\right)^k, $$ which proves that $T$ is linear on the linear span of the set $\{ a^{k} : k\in \mathbb{N}\}$. We conclude by continuity that $T|_{A_{h}}$ is linear.
\end{proof}

Let $A$ be a C$^*$-algebra. Following the notation in \cite{Aarnes70}, a \emph{positive quasi-linear functional} is a function $\rho: A \rightarrow \mathbb{C}$ such that \begin{enumerate}[$(i)$]\item $\rho_{A_{h}}$ is a positive linear functional for each $h\in A_{sa}$;
\item $\rho (a + ib) = \rho (a) + i \rho (b)$, for every $a,b\in A_{sa}$.
\end{enumerate} When the mapping $\rho$ also satisfies that
$\sup \{ \rho(a) : a \in A, \|a\|\leq 1, a\geq 0 \}= 1$, then $\rho$ is called a \emph{quasi-state} on $A$.\smallskip

Let $T: A\to B$ be a (not necessarily linear) 2-local $^*$-homomorphism between C$^*$-algebras. For each positive functional $\phi\in B^{*}_{+}$, Lemmas \ref{l complex combinations hermitians} and \ref{l linearity on single generated subalgebras} assure that $\phi \circ T: A \to \mathbb{C}$ is a positive multiple of a quasi-state on $A$. When $A$ coincides with the C$^*$-algebra $K(H)$ of all compact operators on a complex Hilbert space $H$ with dim$(H)\geq 3$, Corollary 2 in \cite{Aarnes70} implies that $\phi \circ T$ is linear. Having in mind that for each self-adjoint element $h\in B$ we have $\|h\| = \sup \{ \phi (h) : \phi\geq 0, \|\phi\| =1 \}$ (cf. \cite[Proposition 1.5.4]{Sak}), we deduce that $T$ is linear. Combining these arguments with Proposition \ref{p 2-local *-hom on M2} we get:

\begin{proposition}\label{p 2-local compact operators} Let $H$ be a complex Hilbert space, $B$ a C$^*$-algebra, and let $T: K(H)\to B$ be a (not necessarily linear) 2-local $^*$-homomorphism. Then $T$ is linear and a $^*$-homomorphism. $\hfill\Box$
\end{proposition}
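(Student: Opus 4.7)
The plan is to reduce the $\mathbb{C}$-linearity of $T$ to the linearity of the scalar-valued maps $\phi \circ T$ for $\phi \in B^{*}_{+}$, and to derive the latter from Aarnes' theorem on quasi-states on dual C$^{*}$-algebras. The key reduction is already visible in the preceding discussion: for each positive functional $\phi \in B^{*}_{+}$, the composition $\phi \circ T : K(H) \to \mathbb{C}$ is a positive multiple of a quasi-state on $K(H)$. Indeed, Lemma \ref{l linearity on single generated subalgebras} gives linearity on every singly generated self-adjoint subalgebra $A_{h}$ (and positivity follows because, writing a positive element of $A_{h}$ as $c^{2}$ with $c = c^{*}$, one has $T(c^{2}) = T(c)^{2} = T(c)^{*} T(c) \geq 0$ by Lemma \ref{l basic properties}(d),(f)), while Lemma \ref{l complex combinations hermitians} supplies the compatibility $\phi \circ T(a + ib) = \phi \circ T(a) + i\, \phi \circ T(b)$ for $a, b \in K(H)_{sa}$.

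First I would dispose of the low-dimensional case. If $\dim H \leq 2$, then $K(H) = M_{n}(\mathbb{C})$ with $n \leq 2$, and linearity of $T$ is covered by Proposition \ref{p 2-local *-hom on M2} (the case $n = 1$ being immediate from 1-homogeneity, Lemma \ref{l basic properties}(a)). For the main case $\dim H \geq 3$, Corollary 2 of \cite{Aarnes70} asserts that every quasi-state on $K(H)$ is automatically a state, so $\phi \circ T$ is linear for every $\phi \in B^{*}_{+}$. Because the positive cone $B^{*}_{+}$ norms $B_{sa}$ (Proposition 1.5.4 of \cite{Sak}), the scalar identities $\phi\bigl(T(a+b) - T(a) - T(b)\bigr) = 0$ holding for all $\phi \in B^{*}_{+}$ and all $a, b \in K(H)_{sa}$ force additivity of $T$ on $K(H)_{sa}$; combining this with 1-homogeneity and with Lemma \ref{l complex combinations hermitians} upgrades it to full $\mathbb{C}$-linearity of $T$ on $K(H)$.

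Once $T$ is linear, Theorem 3.9 of \cite{Pe2014} (asserting that every linear 2-local $^{*}$-homomorphism between C$^{*}$-algebras is a $^{*}$-homomorphism) concludes the argument. The main obstacle is the case $\dim H = 2$, which lies outside the scope of Aarnes' theorem and must be handled separately by Proposition \ref{p 2-local *-hom on M2}; apart from this low-dimensional wrinkle, the proof is a fairly direct assembly of Aarnes' quasi-state theorem with the machinery already developed in Lemmas \ref{l basic properties}--\ref{l linearity on single generated subalgebras}.
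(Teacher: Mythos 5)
Your proposal is correct and follows essentially the same route as the paper: reduce to the scalar maps $\phi\circ T$ for $\phi\in B^{*}_{+}$, observe via Lemmas \ref{l complex combinations hermitians} and \ref{l linearity on single generated subalgebras} that each is a positive multiple of a quasi-state, invoke Corollary 2 of \cite{Aarnes70} for $\dim(H)\geq 3$, use that positive functionals norm $B_{sa}$ to get linearity, handle low dimensions by Proposition \ref{p 2-local *-hom on M2}, and finish with \cite[Theorem 3.9]{Pe2014}. Your added details (the positivity check $T(c^{2})=T(c)^{*}T(c)\geq 0$ and the explicit $n=1$ case) are correct refinements of what the paper leaves implicit.
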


By \cite[Theorem 8.2.]{Alex}, we know that every compact or dual C$^*$-algebra $A$ decomposes as a $c_0$-sum of the form $\displaystyle A= \left(\bigoplus_{\lambda} K(H_{\lambda})\right)_{c_0}$, where each $H_{\lambda}$ is a complex Hilbert space. Suppose $B$ is a C$^*$-algebra and $$T: \displaystyle \left(\bigoplus_{\lambda} K(H_{\lambda})\right)_{c_0} \to B$$ is a (not necessarily linear) 2-local $^*$-homomorphism. Proposition \ref{p 2-local compact operators} implies that $T|_{K(H_{\lambda})}: K(H_{\lambda}) \to B$ is a linear homomorphism for every $\lambda$. For each pair of elements $a,b\in A$ and $\varepsilon>0$ we can find a natural $m$, $\lambda_1,\ldots,\lambda_m$, finite dimensional subspaces $\widetilde{H}_{\lambda_i} \subseteq H_{\lambda_i}$ and elements $a_{\varepsilon}, b_{\varepsilon} \in \displaystyle \bigoplus_{i=1,\ldots,m}^{\ell_{\infty}} K(\widetilde{H}_{\lambda_i})$ satisfying $\|a-a_{\varepsilon}\| <\frac{\varepsilon}{4}$, and $\|b-b_{\varepsilon}\| <\frac{\varepsilon}{4}$. Set $A_1 := \displaystyle \bigoplus_{i=1,\ldots,m}^{\ell_{\infty}} K(\widetilde{H}_{\lambda_i})$. By Corollary \ref{c stability for ell infty sums}, the mapping $\displaystyle T|_{A_1}: A_1 \to B$ is linear. Therefore, by Lemma \ref{l basic properties}$(e)$, $$\left\|T(a+b) -T(a)-T(b) \right\| \leq \left\|T(a+b) -T(a_{\varepsilon}+ b_{\varepsilon}) \right\| + \left\|T(a_{\varepsilon}) -T(a) \right\|$$ $$ + \left\|T(b_{\varepsilon}) -T(b) \right\| \leq \left\|(a+b) -(a_{\varepsilon}+ b_{\varepsilon}) \right\| + \left\|a_{\varepsilon}-a \right\| + \left\|b_{\varepsilon} -b \right\| <\varepsilon.$$ The arbitrariness of $\varepsilon>0$ proves the additivity of $T$. The final statement follows from \cite[Theorem 3.9]{Pe2014}.

\begin{theorem}\label{t 2-local compact C*-algebras} Let $A$ be a dual or compact C$^*$-algebra, $B$ a C$^*$-algebra, and let $T: A\to B$ be a (not necessarily linear) 2-local $^*$-homomorphism. Then $T$ is linear and a $^*$-homomorphism. $\hfill\Box$
\end{theorem}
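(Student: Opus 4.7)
The approach is to reduce the result to the $K(H)$ case already settled in Proposition~\ref{p 2-local compact operators} by invoking the classical structure theorem for dual/compact C*-algebras, and then extend linearity from a dense subalgebra to the whole of $A$ using the $1$-Lipschitz estimate of Lemma~\ref{l basic properties}(e), in the spirit of the closing density argument in the proof of Corollary~\ref{c type I2}.

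First, I would recall that every dual or compact C*-algebra $A$ admits a decomposition as a $c_0$-sum
$$A = \left(\bigoplus_{\lambda \in \Lambda} K(H_{\lambda})\right)_{c_0}$$
for some family of complex Hilbert spaces $\{H_{\lambda}\}$ (cf.\ \cite[Theorem 8.2]{Alex}). Proposition~\ref{p 2-local compact operators} applied to each summand yields that $T|_{K(H_{\lambda})}$ is already linear (and in fact a $^*$-homomorphism) for every $\lambda \in \Lambda$. More generally, for any finite index set $\{\lambda_1,\ldots,\lambda_m\}\subseteq\Lambda$ and any finite-dimensional subspaces $\widetilde{H}_{\lambda_i}\subseteq H_{\lambda_i}$, the $\ell_\infty$-sum $A_1 := \bigoplus_{i=1}^{m} K(\widetilde{H}_{\lambda_i})$ sits inside $A$ as a C*-subalgebra isomorphic to a finite $\ell_\infty$-sum of matrix algebras, so Corollary~\ref{c stability for ell infty sums} (combined with Proposition~\ref{p 2-local compact operators} in finite dimensions) guarantees that $T|_{A_1}$ is linear.

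To globalize additivity, fix $a,b\in A$ and $\varepsilon>0$. Because the direct sum is of $c_0$-type and each component is a norm limit of finite-rank operators, I can choose finitely many indices $\lambda_1,\ldots,\lambda_m$ and finite-dimensional $\widetilde{H}_{\lambda_i}\subseteq H_{\lambda_i}$ together with $a_\varepsilon, b_\varepsilon \in A_1$ satisfying $\|a-a_\varepsilon\|<\varepsilon/4$ and $\|b-b_\varepsilon\|<\varepsilon/4$. The $1$-Lipschitz property and the linearity of $T|_{A_1}$ then give
\begin{align*}
\|T(a+b)-T(a)-T(b)\|
&\leq \|T(a+b)-T(a_\varepsilon+b_\varepsilon)\| + \|T(a_\varepsilon)-T(a)\| + \|T(b_\varepsilon)-T(b)\|\\
&\leq \|(a+b)-(a_\varepsilon+b_\varepsilon)\| + \|a_\varepsilon-a\| + \|b_\varepsilon-b\| < \varepsilon.
\end{align*}
Since $\varepsilon$ was arbitrary, $T$ is additive; combined with the $1$-homogeneity of Lemma~\ref{l basic properties}(a) and with Lemma~\ref{l complex combinations hermitians}, this upgrades to $\mathbb{C}$-linearity. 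The multiplicative conclusion is then free: \cite[Theorem 3.9]{Pe2014} states that any linear 2-local $^*$-homomorphism between C*-algebras is a $^*$-homomorphism.

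The main obstacle is the simultaneous truncation: one must arrange that the \emph{same} finite $\ell_\infty$-sum $A_1\subseteq A$ approximates both $a$ and $b$. This requires cutting the $c_0$-direct sum to the finitely many coordinates on which either $a$ or $b$ has norm exceeding $\varepsilon/8$, and then, within each surviving coordinate, replacing the compact operators by finite-rank compressions via singular-value truncation. Once this joint approximation is in place, the argument is routine; the entire nontriviality of the theorem is therefore concentrated in Proposition~\ref{p 2-local compact operators} (which depends on the Aarnes variant of Mackey--Gleason) and in the finite $\ell_\infty$-stability Corollary~\ref{c stability for ell infty sums}.
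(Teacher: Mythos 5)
Your proposal follows essentially the same route as the paper: the $c_0$-decomposition of \cite[Theorem 8.2]{Alex}, Proposition~\ref{p 2-local compact operators} on each summand, joint approximation of $a$ and $b$ inside a finite $\ell_\infty$-sum of finite-dimensional algebras handled by Corollary~\ref{c stability for ell infty sums}, the $1$-Lipschitz estimate of Lemma~\ref{l basic properties}$(e)$ to globalize additivity, and \cite[Theorem 3.9]{Pe2014} to conclude. The argument is correct; your remark on how to arrange the simultaneous truncation is a detail the paper leaves implicit, and it is handled appropriately.
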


\begin{corollary}\label{c 2-local *auto prime} Every (not necessarily linear) surjective 2-local $^*$-homomor-phism $T$ on a prime C$^*$-algebra $A,$ with $T(K(A))\neq \{0\}$ is a $^*$-homomorphism. Consequently, every (not necessarily linear) surjective 2-local $^*$-automor-phism on a prime C$^*$-algebra with non-zero socle is a $^*$-automorphism.
\end{corollary}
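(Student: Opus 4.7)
The plan is to reduce to Theorem \ref{t 2-local compact C*-algebras} applied to the restriction $T|_{K(A)}$, and then to upgrade additivity on $K(A)$ to additivity on all of $A$ using the essentiality of $K(A)$ in the prime algebra $A$. First I would observe that $T|_{K(A)} : K(A) \to A$ is itself a (not necessarily linear) 2-local $^*$-homomorphism, and that $K(A)$ is a dual C$^*$-algebra; so Theorem \ref{t 2-local compact C*-algebras} applies and $\pi := T|_{K(A)}$ is a linear $^*$-homomorphism. Since $A$ is prime with non-zero socle, $K(A) \cong K(H)$ for some Hilbert space $H$ and is thus simple; and because by hypothesis $T(K(A)) \neq \{0\}$, the map $\pi$ is injective.

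Next I would exploit primeness to control the $^*$-homomorphisms provided by the 2-local property of $T$. For any $a \in A$ and any $k \in K(A)$ with $T(k) \neq 0$, the kernel of $\Phi_{a,k}$ is a closed two-sided ideal of $A$ whose intersection with $K(A)$ is trivial (since $\Phi_{a,k}|_{K(A)}$ is a non-zero $^*$-homomorphism on the simple algebra $K(A)$); the essentiality of $K(A)$ in the prime algebra $A$ then forces $\Phi_{a,k}$ to be injective on all of $A$. In particular, these $\Phi_{a,k}$'s are all $^*$-monomorphisms whose restrictions to $K(A)$ are candidate copies of $\pi$.

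The main step is additivity of $T$ on $A$. For $a, b \in A$, I set $c := T(a+b) - T(a) - T(b)$ and aim to show $c\, T(k) = 0$ for every $k \in K(A)$. Applying multiplicativity of the $^*$-homomorphisms $\Phi_{a+b,k}$, $\Phi_{a,k}$ and $\Phi_{b,k}$ gives
\[
T(a+b) T(k) = \Phi_{a+b,k}((a+b)k), \quad T(a) T(k) = \Phi_{a,k}(ak), \quad T(b) T(k) = \Phi_{b,k}(bk),
\]
with each argument lying in $K(A)$. Matching each right-hand side with $\pi((a+b)k)$, $\pi(ak)$ and $\pi(bk)$ respectively, and invoking linearity of $\pi$ on $K(A)$, yields $c\, T(k) = 0$. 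Once this holds for all $k \in K(A)$, the essentiality of the non-zero $^*$-subalgebra $\pi(K(A))$ in $A$ (guaranteed by surjectivity of $T$, which should force $\pi(K(A)) = K(A)$) gives $c = 0$. Additivity of $T$, combined with Lemma \ref{l basic properties}$(a)$ and Lemma \ref{l complex combinations hermitians}, yields linearity of $T$; a final appeal to \cite[Theorem 3.9]{Pe2014} concludes that $T$ is a $^*$-homomorphism. The second statement follows immediately, because a surjective 2-local $^*$-automorphism $T$ automatically preserves $K(A)$ (each $\Phi_{a,b}$ being a $^*$-automorphism), and surjectivity forces $T(K(A)) = K(A) \neq \{0\}$.

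The principal obstacle is the identification $\Phi_{a+b,k}((a+b)k) = \pi((a+b)k)$ (and analogously for the other two products): two $^*$-homomorphisms from the simple algebra $K(H)$ into $A$ that agree only at a single non-zero element $k$ need not agree globally. I expect that overcoming this requires a finer use of surjectivity, first to conclude $\pi(K(A)) = K(A)$, then to upgrade $\pi$ to a $^*$-automorphism of $K(A)$ and extend it uniquely to a $^*$-automorphism of the multiplier algebra $M(K(A)) = B(H)$ containing $A$, and finally to compare this extension with $T$ on $A$ by deploying the 2-local data for pairs $(a,k)$ as $k$ ranges over $K(A)$.
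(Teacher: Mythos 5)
You have the right opening moves (Theorem \ref{t 2-local compact C*-algebras} applied to $T|_{K(A)}$, injectivity of that restriction, and the reduction of the second statement to the first), and you have correctly located the crux; but the route you choose runs straight into the obstacle you yourself name, and the proposed repair (extending $\pi$ to the multiplier algebra and comparing with $T$) is not carried out, so the main additivity step is a genuine gap. The identity $T(a)T(k)=\pi(ak)$ that your argument needs is indeed unavailable: $\Phi_{a,k}|_{K(A)}$ and $\pi$ are two $^*$-monomorphisms of $K(H)$ that agree only at the single element $k$, and for a one-sided product $ak$ (a generic rank-one element when $k$ is a minimal projection) nothing pins down its image under $\Phi_{a,k}$. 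Your endgame also needs $\pi(K(A))$ to be essential in $A$, i.e.\ the extra claim $\pi(K(A))=K(A)$, which you would still have to extract from surjectivity.

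The paper closes the gap with two moves you do not make. First, it uses surjectivity at the outset to pull the defect back into the \emph{domain}: choose $z\in A$ with $T(z)=T(a+b)-T(a)-T(b)$; it then suffices to prove $z=0$, and for that one only needs essentiality of $K(A)$ in the prime algebra $A$, not of its image. Second, instead of the one-sided product $T(a)T(k)$ it sandwiches on both sides, using $T(x)T(c)T(x)=\Phi_{x,c}(xcx)$ for $x\in K(A)$. The payoff is clearest when $x=p$ is a \emph{minimal} projection: then $pcp=\lambda_c\, p\in\mathbb{C}p$, so $\Phi_{p,c}(pcp)=\lambda_c\,\Phi_{p,c}(p)=\lambda_c\,T(p)=T(pcp)$ follows from $1$-homogeneity (Lemma \ref{l basic properties}$(a)$) alone --- no global comparison of $\Phi_{p,c}|_{K(A)}$ with $\pi$ is required, because the needed value of $\Phi_{p,c}$ is already determined by the 2-local data at the pair $(p,c)$ itself. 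Then $T(p)\bigl(T(a+b)-T(a)-T(b)\bigr)T(p)=T(p(a+b)p)-T(pap)-T(pbp)=0$ by linearity of $T|_{K(A)}$, whence $T(pzp)=0$, so $pzp=0$ by injectivity of $T|_{K(A)}$, and $z=0$ since this holds for every minimal projection $p$. So your diagnosis --- that one-point agreement of homomorphisms is the problem --- is exactly right; the fix is not a finer global comparison but a choice of test elements (two-sided compressions by minimal projections, together with pulling the defect back through $T$) for which the 2-local hypothesis already forces the answer.
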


\begin{proof} By Theorem \ref{t 2-local compact C*-algebras}, $T|_{K(A)} : K(A)\to B$ is linear and a $^*$-homomorphism. Therefore, $\ker (T|_{K(A)})$ is a closed ideal of $A$. We recall that every prime C$^*$-algebra with non-zero socle is primitive and hence its socle is contained in every non-zero closed ideal of $A$. Thus,  $\ker (T|_{K(A)})=\{0\}.$\smallskip

Let us take $a,b\in A$, $x\in K(A).$ By the hypothesis of 2-locality and Theorem \ref{t 2-local compact C*-algebras}, $$T(x) T(a+b) T(x)=T(x(a+b)x) = T(xax) + T(xbx) $$ $$= T(x) T(a) T(x) + T(x) T(b) T(x).$$
Since $T$ is surjective, we can find $z\in A$ such that $T(z) = T(a+b)-T(a)-T(b).$ It follows from the above that $T(xzx)=0$ and hence $xzx=0$, for every $x\in K(A)$. The essentiality of $K(A)$ proves that $z=0$, witnessing that $T(a+b) = T(a)+T(b).$
\end{proof}

\begin{remark}\label{r KimKim} In \cite[Theorem 6]{KimKim05}, the authors prove that for every unital prime C$^*$-algebra $A$, which has a nontrivial idempotent or whose unit element is properly infinite, every surjective 2-local $^*$-automorphism, $T: A\to A$, is a $^*$-automorphism. It should be remarked here that the result is true for every C$^*$-algebra $A$ without requiring any additional hypothesis. Indeed, as noted in \cite{KimKim05}, $T$ is a surjective isometry, which, by the Mazur-Ulam theorem, implies that $T$ is linear. In particular, $T$ is a linear local $^*$-homomorphism, and hence a $^*$-homomorphism by \cite[Theorem 3.9]{Pe2014}.
\end{remark}

\section{2-local Jordan $^*$-homomorphisms on JBW$^*$-algebras}

A \emph{JB$^*$-algebra} or a \emph{Jordan C$^*$-algebra} is a complex Jordan Banach algebra $\mathcal{J}$ equipped with an algebra involution $^*$ satisfying  $\|U_a (a^*) \|= \|a\|^3$, $a\in \mathcal{J}$, where $U_a (x) := 2 (a\circ x) \circ a - a^2 \circ x$. The self-adjoint part $\mathcal{J}_{sa}$ of a JB$^*$-algebra $\mathcal{J}$ is a JB-algebra in the usual sense employed in \cite{Chu2012,HOS}. The reciprocal statement also holds, each JB algebra is the self-adjoint part of a unique JB$^*$-algebra \cite{Wri77}. A JBW$^*$-algebra is a JB$^*$-algebra which is also a dual Banach space. For the standard definitions and properties of JB$^*$- and JBW$^*$-algebras we refer to \cite{HOS,Chu2012} and \cite{Wri77}.\smallskip

Let $\mathcal{J}_1$, $\mathcal{J}_2$ be Jordan Banach algebras. A (not necessarily linear nor continuous) mapping $T: \mathcal{J}_1\to \mathcal{J}_2$ is said to be a \emph{2-local Jordan homomorphism} if for every $a,b\in \mathcal{J}_1$ there exists a bounded (linear) Jordan homomorphism $\Phi_{a,b}: \mathcal{J}_1\to \mathcal{J}_2$, depending on $a$ and $b$, such that $\Phi_{a,b} (a) = T(a)$ and $\Phi_{a,b}(b) = T(b)$. When $\mathcal{J}_1$ and $\mathcal{J}_2$ are JB$^*$-algebras, a mapping $T: \mathcal{J}_1\to \mathcal{J}_2$ is called a \emph{2-local Jordan $^*$-homomorphism} if for every $a,b\in \mathcal{J}_1$ there exists a Jordan $^*$-homomorphism $\Phi_{a,b}: \mathcal{J}_1\to \mathcal{J}_2$, depending on $a$ and $b$, such that $\Phi_{a,b} (a) = T(a)$ and $\Phi_{a,b}(b) = T(b)$.\smallskip

The statements $(a),$ $(b)$ and $(d)-(h)$ in Lemma \ref{l basic properties} remain valid for 2-local Jordan homomorphisms between Jordan Banach algebras and for 2-local Jordan $^*$-homomorphisms between JB$^*$-algebras, with the particularity that elements $a$ and $b$ in a JB$^*$-algebra $\mathcal{J}$ are \emph{orthogonal} ($a\perp b$) if and only if $(a \circ b^*) \circ x + (x \circ b^*) \circ a - (a\circ x) \circ b^* = 0$ for every $x\in \mathcal{J}$, or equivalently $(a \circ a^*) \circ b + (b \circ a^*) \circ a - (a\circ b) \circ a^* = 0$ (see \cite[Lemma 1.1]{BurFerGarMarPe}). Clearly, when $a$ is a self-adjoint element $a\perp b$ if and only if $a^2\circ b =0.$ Further, if $a$ is positive, $a\perp b$ if and only if $a\circ b=0$ (cf. \cite[Lemma 4.1]{BurFerGarPe09}).\smallskip

Let $T: \mathcal{J}_1\to \mathcal{J}_2$ be a (not necessarily linear) 2-local Jordan $^*$-homomor-phism between JB$^*$-algebras and let $p_1,\ldots, p_n$ be a family of mutually orthogonal projections in $\mathcal{J}_1$. The arguments in the proofs of Lemmas \ref{l additivity on orthogonal idempotents} and \ref{l linearity on orthogonal idempotents} can be slightly modified to be valid in the Jordan setting in order to prove that  \begin{equation}\label{eq linearity Jordan *-homomoprhisms}  T\left(\sum_{i=1}^{n} \lambda_i p_i \right) = \sum_{i=1}^{n} \lambda_i T(p_i),
 \end{equation}for every $\lambda_1, \ldots, \lambda_n\in \mathbb{C}$. Furthermore, Lemmas \ref{l complex combinations hermitians} and \ref{l linearity on single generated subalgebras} are true for 2-local Jordan $^*$-homomorphisms between JB$^*$-algebras, and thus \begin{enumerate}[$(4.i)$] \item $T (a + i b) = T(a) + i T(b),$ for every $a,b\in \left(\mathcal{J}_{1}\right)_{sa}$;
 \item For each $h\in \left(\mathcal{J}_{1}\right)_{sa}$, $T_{\left(\mathcal{J}_{1}\right)_{h}} : \left(\mathcal{J}_{1}\right)_{h} \to \mathcal{J}_{2}$ is a linear mapping, where $\left(\mathcal{J}_{1}\right)_{h}$ denotes the JB$^*$-subalgebra generated by the element $h$.
 \end{enumerate}

When in the proof of Theorem \ref{t 2-local hom on von Neumann algebras not containing M_2} we replace the Bunce-Wright-Mackey-Gleason theorem for von Neumann algebras (cf. \cite[Theorem A]{BuWri92} or \cite{BuWri94}) with an appropriate version for JBW-algebras (see \cite[Theorem 2.1]{BuWri89}) we obtain the following:

\begin{theorem}\label{t 2-local Jordan hom on von Neumann algebras not containing I_2} Let $\mathcal{J}$ be a JBW$^*$-algebra without Type $I_2$ part and let $\mathcal{B}$ be a JB$^*$-algebra. Suppose $T : \mathcal{J} \to \mathcal{B}$ is a (not necessarily linear) 2-local Jordan $^*$-homomorphism. Then $T$ is linear and a Jordan $^*$-homomorphism. $\hfill\Box$
\end{theorem}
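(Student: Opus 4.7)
The plan is to imitate the proof of Theorem \ref{t 2-local hom on von Neumann algebras not containing M_2}, replacing every use of the Bunce–Wright–Mackey–Gleason theorem for von Neumann algebras and its accompanying C$^*$-results with their Jordan analogues. First, from the Jordan version of Lemma \ref{l basic properties}$(e)$ I would record that $T$ is $1$-Lipschitzian, hence norm continuous, and from the Jordan analogue of $(g)$ that $T$ sends projections of $\mathcal{J}$ to projections of $\mathcal{B}$. Defining $\mu : \mathcal{P}(\mathcal{J}) \to \mathcal{B}$ by $\mu(p) := T(p)$, the identity \eqref{eq linearity Jordan *-homomoprhisms} applied with $\lambda_1=\lambda_2=1$ yields finite additivity on orthogonal projections, and $\|\mu(p)\| \le 1$ gives uniform boundedness.

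Next I would invoke the JBW version of the Bunce–Wright theorem, \cite[Theorem 2.1]{BuWri89}, which applies precisely because $\mathcal{J}$ has no Type $I_2$ direct summand: this yields a bounded linear operator $G : \mathcal{J} \to \mathcal{B}$ with $G(p) = \mu(p) = T(p)$ for every $p \in \mathcal{P}(\mathcal{J})$. For an algebraic self-adjoint element $h = \sum_{i=1}^{n} \lambda_i p_i$ with pairwise orthogonal $p_i$, identity \eqref{eq linearity Jordan *-homomoprhisms} together with the linearity of $G$ gives $T(h) = G(h)$. The JBW$^*$-spectral theorem lets me approximate each self-adjoint element of $\mathcal{J}$ in norm by such algebraic elements, and since both $T$ and $G$ are norm continuous, $T$ coincides with $G$ on $\mathcal{J}_{sa}$; in particular $T|_{\mathcal{J}_{sa}}$ is real-linear.

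Property $(4.i)$ then extends additivity and complex-homogeneity from $\mathcal{J}_{sa}$ to the whole of $\mathcal{J}$ via the canonical decomposition $a = \frac{1}{2}(a+a^{*}) + i\,\frac{1}{2i}(a-a^{*})$, yielding the full linearity of $T$. To upgrade linearity to a Jordan $^{*}$-homomorphism I would use the Jordan analogue of Lemma \ref{l basic properties}$(d)$, namely $T(a^{2}) = T(a)^{2}$ for every $a$; polarising this identity with the newly established linearity gives $T(a\circ b) = T(a)\circ T(b)$, and combining with the Jordan analogue of $(f)$ produces the $^{*}$-preserving property, so $T$ is a Jordan $^{*}$-homomorphism.

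The main technical obstacle I anticipate is ensuring that the Bunce–Wright framework really does supply a bounded $\mathcal{B}$-valued linear extension in the JBW$^{*}$-setting, since \cite{BuWri89} is customarily phrased for scalar- or Banach-space–valued quasi-measures on JBW-algebras without Type $I_{2}$ part. One has to check that composing with functionals $\phi \in \mathcal{B}^{*}$ and using that $\mathcal{B}^{*}_{+}$ norms $\mathcal{B}_{sa}$ reduces the $\mathcal{B}$-valued problem to the scalar case, while the uniform bound $\|\mu(p)\|\le 1$ on projections yields (through a Banach–Steinhaus style argument identical to the one implicit in the von Neumann proof) a common bound for the linear extensions $\phi \circ G$. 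Once that passage is secured, the remainder of the argument is a mechanical translation of the proof of Theorem \ref{t 2-local hom on von Neumann algebras not containing M_2}.
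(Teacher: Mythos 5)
Your proposal is correct and follows essentially the same route as the paper, which proves this theorem precisely by rerunning the proof of Theorem \ref{t 2-local hom on von Neumann algebras not containing M_2} with the Bunce--Wright--Mackey--Gleason theorem replaced by its JBW-algebra analogue \cite[Theorem 2.1]{BuWri89}, using \eqref{eq linearity Jordan *-homomoprhisms} and $(4.i)$ in place of Lemmas \ref{l linearity on orthogonal idempotents} and \ref{l complex combinations hermitians}. Your final step --- deducing the Jordan $^*$-homomorphism property directly from linearity together with the Jordan versions of Lemma \ref{l basic properties}$(d)$ and $(f)$ rather than citing \cite{Pop} or \cite{Pe2014} --- is the right adaptation here, since in the Jordan setting those identities are exactly the definition of a Jordan $^*$-homomorphism.
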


We recall now a result which is part of the folklore in C$^*$-algebra theory: suppose $p$ is a projection in a unital C$^*$-algebra $A$ and $x$ is a norm-one (self-adjoint) element in $A$ satisfying that $p x p = p$ then \begin{equation}\label{eq FriRu 1.6 C*}
x= p + (1-p) x (1-p).
\end{equation} Indeed, since $1\geq \| p x \|$ and $$p x = p + p x (1-p),$$ it follows that $$(p x) (p x)^* = p + p x (1-p) x^* p,$$ is a positive norm-one element in $p A p$. Moreover, since $\|(px) (px)^*\|\leq 1,$ and the element $p x (1-p) x^* p$ also is positive in $p A p,$ it must be zero, and hence $p x (1-p) = 0$. We similarly get $(1-p) x p=0$ and the desired statement \eqref{eq FriRu 1.6 C*}.\smallskip

Now, let $p$ be a projection in a unital JB$^*$-algebra $\mathcal{J}$ and let $x$ be a norm-one self-adjoint element in $\mathcal{J}$ satisfying that $U_{p} (x) = p$ then \begin{equation}\label{eq FriRu 1.6 JB*}
x= p + U_{(1-p)} (x).
\end{equation} In effect, by the Shirshov-Cohn theorem \cite[2.4.14]{HOS}, the JB$^*$-subalgebra $\mathcal{B}$ of $\mathcal{J}$ generated by $p$, $x$ and the unit element is special, that is, there exists a C$^*$-algebra $A$ such that $\mathcal{B}$ is a JB$^*$-subalgebra of $A$. In such a case, $p$ is a projection in $\mathcal{B}$, and hence in $A$, and it is satisfied that $\|x\|=1$, and  $p x p = U_{p} (x) = p$. We deduce from \eqref{eq FriRu 1.6 C*} that $x=p + (1-p) x (1-p)= p + U_{(1-p)} (x).$\smallskip

A stronger version of \eqref{eq FriRu 1.6 C*} and \eqref{eq FriRu 1.6 JB*} was established by Friedman and Russo in \cite[Lemma 1.6]{FriRu85} for tripotents in a JB$^*$-triple.\smallskip

To deal with JBW$^*$-algebras of Type $I_2$, we shall need the following partial result.

\begin{proposition}\label{p 2-local Jordan *-hom on H2} Let $\mathcal{J}$ a JB-algebra, and let $T: M_2(\mathbb{R})_{sa}\to \mathcal{J}$ be a (not necessarily linear) 2-local Jordan homomorphism. Then $T$ is linear and a Jordan homomorphism.
\end{proposition}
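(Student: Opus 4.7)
The plan is to show $T$ is $\mathbb{R}$-linear on the ordered basis $\{p_1, p_2, s\}$ of $M_2(\mathbb{R})_{sa}$; once that is in hand, linearity on the whole algebra follows, and $T$ is a Jordan homomorphism because $T(a^2)=T(a)^2$ (Jordan analog of Lemma~\ref{l basic properties}$(d)$) polarizes to $T(a\circ b)=T(a)\circ T(b)$. I abbreviate $P_i:=T(p_i)$, $U:=T(s)$, $e:=T(I)$, fix a generic $z=\lambda p_1+\mu p_2+\nu s$, and set $X:=T(z)$; the target identity is $X=\lambda P_1+\mu P_2+\nu U$. Preliminary facts to record: $P_1,P_2$ are Jordan-orthogonal projections with $P_1+P_2=e$ (by the Jordan versions of Lemma~\ref{l basic properties} and \eqref{eq linearity Jordan *-homomoprhisms}); $U^2=T(s^2)=e$; and applying $\Phi_{p_i,s}$ to $p_i\circ s=s/2$ gives $P_i\circ U=U/2$, so $U$ lies in the Peirce-$1$ space of each $P_i$.

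First I would pin down the Peirce-$2$ and Peirce-$0$ parts of $X$ relative to $P_1$. Since any Jordan homomorphism intertwines the quadratic operator, applying $\Phi_{p_1,z}$ gives $U_{P_1}(X)=\Phi_{p_1,z}(U_{p_1}(z))$. A direct computation in $M_2(\mathbb{R})_{sa}$ using $p_1\circ p_1=p_1$, $p_1\circ p_2=0$, $p_1\circ s=s/2$ yields $U_{p_1}(z)=\lambda p_1$, hence $U_{P_1}(X)=\lambda P_1$. The symmetric argument with $\Phi_{p_2,z}$ gives $U_{P_2}(X)=\mu P_2$. Consequently, the Peirce decomposition of $X$ relative to $P_1$ reads $X=\lambda P_1+X_1+\mu P_2$ for a unique $X_1$ in the Peirce-$1$ space of $P_1$ (equivalently, $P_1\circ X_1=P_2\circ X_1=X_1/2$).

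Next I would extract two algebraic identities on $X_1$. Applying $\Phi_{s,z}$ to $s\circ z=\tfrac{\lambda+\mu}{2}s+\nu I$ and using $\Phi_{s,z}(I)=U^2=e$, one gets $U\circ X=\tfrac{\lambda+\mu}{2}U+\nu e$; substituting the Peirce decomposition of $X$ together with $P_i\circ U=U/2$ collapses this to $U\circ X_1=\nu e$. Separately, the identity $T(z)^2=T(z^2)$ combined with the Cayley--Hamilton relation $z^2=(\lambda+\mu)z-(\lambda\mu-\nu^2)I$ and linearity of $T$ on the Jordan subalgebra generated by $z$ (Jordan version of Lemma~\ref{l linearity on single generated subalgebras}) gives $T(z^2)=(\lambda+\mu)X-(\lambda\mu-\nu^2)e$; expanding $X^2$ via the Peirce decomposition and matching the $P_1$, $P_2$, and Peirce-$1$ components yields $X_1^2=\nu^2 e$.

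The finishing step, and the one I expect to be the decisive obstacle, is to combine these two identities into a perfect square:
\[
(X_1-\nu U)^2 \;=\; X_1^2 - 2\nu(X_1\circ U) + \nu^2 U^2 \;=\; \nu^2 e - 2\nu^2 e + \nu^2 e \;=\; 0.
\]
Since $\mathcal{J}$ is a JB-algebra, the axiom $\|a^2\|=\|a\|^2$ forces $a^2=0\Rightarrow a=0$, so $X_1=\nu U$ and therefore $X=\lambda P_1+\mu P_2+\nu U$, as required. The difficulty is precisely this uniqueness step: the Peirce and orthogonality constraints alone admit many symmetry-like solutions in a general JB-algebra, and the crux is the conspiracy between $U\circ X_1=\nu e$ and $X_1^2=\nu^2 e$ producing a square-zero element that the JB-axiom can kill. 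The degenerate case $\det z=0$ is handled uniformly (there $\lambda\mu=\nu^2$, and the same expansion still gives $X_1^2=\nu^2 e$), while the case $\nu=0$ is already covered by \eqref{eq linearity Jordan *-homomoprhisms}.
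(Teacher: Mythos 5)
Your argument is correct, and it reaches the conclusion by a genuinely different route than the paper. The paper first reduces to the case of a 2-local Jordan monomorphism (via Lemma \ref{l 2-local on factors} adapted to JBW-factors), then proves $T(\lambda p_1+\mu u)=\lambda T(p_1)+\mu T(u)$ and its analogue for $p_2$ by playing $\Phi_{z,p_1}$ against $\Phi_{z,u}$ and invoking the rigidity statement \eqref{eq FriRu 1.6 JB*} for norm-one elements whose $U_p$-compression equals $p$; the three-parameter case is then settled by several further rounds of such comparisons together with orthogonality and the identity $U_{T(1)}T(a)=T(a)$. You instead compute the Peirce data of $X=T(z)$ directly: $U_{P_i}(X)=\lambda P_1$, resp.\ $\mu P_2$, by pushing $U_{p_i}(z)$ through $\Phi_{p_i,z}$; the relations $U\circ X_1=\nu e$ and $X_1^2=\nu^2 e$ come from $\Phi_{s,z}$ applied to $s\circ z$ and from Cayley--Hamilton combined with $T(z)^2=T(z^2)$; and the perfect square $(X_1-\nu U)^2=0$ is annihilated by the JB-axiom $\|a^2\|=\|a\|^2$. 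This avoids both the reduction to monomorphisms and the Friedman--Russo-type lemma, and is arguably shorter and more transparent. Two small points should be made explicit. First, to identify the Peirce-$0$ part of $X$ relative to $P_1$ with $U_{P_2}(X)=\mu P_2$ (and to justify $P_2\circ X_1=X_1/2$) you need $U_{T(1)}(T(z))=T(z)$; the paper records exactly this ($U_{T(1)}T(a)=U_{\Phi_{a,1}(1)}\Phi_{a,1}(a)=\Phi_{a,1}U_{1}(a)=T(a)$), and it also lets you work inside the unital JB-algebra $U_{T(1)}(\mathcal{J})$ without passing to $\mathcal{J}^{**}$. Second, the identity $X^2=(\lambda+\mu)X-(\lambda\mu-\nu^2)e$ can be obtained in one stroke, with no case split on $\det z$ and no appeal to Lemma \ref{l linearity on single generated subalgebras}, by applying the single Jordan homomorphism $\Phi_{z,I}$ to the Cayley--Hamilton relation, since $\Phi_{z,I}(I)=T(I)=e$. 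With these two lines added, your proof is complete.
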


\begin{proof} Replacing $\mathcal{J}$ with $\mathcal{J}^{**}$ we can always assume that $\mathcal{J}$ is unital.
By Lemma \ref{l 2-local on factors}, whose statement remains valid for JBW-algebra factors, we may assume that $T$ is a 2-local Jordan monomorphism.\smallskip

Let us denote $u= \left(
                                                         \begin{array}{cc}
                                                           0 & 1 \\
                                                           1 & 0 \\
                                                         \end{array}
                                                       \right)$,  $p_1 = \left(
                                                                                           \begin{array}{cc}
                                                                                             1 & 0 \\
                                                                                             0 & 0 \\
                                                                                           \end{array}
                                                                                         \right)$, and $p_2= \left(
                                                                                                           \begin{array}{cc}
                                                                                                             0 & 0 \\
                                                                                                             0 & 1 \\
                                                                                                           \end{array}
                                                                                                         \right)
                                                                                         $.
Clearly, $T(p_1)$ and $T(p_2)$ are mutually orthogonal projections, $T(u)^3 = T(u)$, $T(p_i) \circ T(u) = \frac12 T(u)$ (and hence, $U_{T(p_1)} (T(u)) =0= U_{1-T(p_1)} (T(u)) $), and $T(u)^2 = T(1)$.\smallskip


First we prove that \begin{equation}\label{eq one in Lema H2} T(\lambda p_1 + \mu u ) = \lambda T(p_1) + \mu T(u),
\end{equation} for every $\lambda, \mu \in \mathbb{R}$. For the proof we assume $\lambda,\mu \neq 0$. Indeed, set $z= \lambda p_1 + \mu u$ and consider the Jordan homomorphisms $\Phi_{z,p_1}, \Phi_{u,z}: M_2(\mathbb{R})_{sa}\to \mathcal{J}$. The equalities $$T(z) = \lambda \Phi_{z,p_1} (p_1) + \mu \Phi_{z,p_1} (u)=  \lambda T(p_1) + \mu \Phi_{z,p_1} (u),$$
$$T(z) = \lambda \Phi_{z,u} (p_1) + \mu \Phi_{z,u} (u) = \lambda \Phi_{z,u} (p_1) + \mu T (u),$$
imply that $$\Phi_{z,u} (p_1) = T(p_1) +\frac{\mu}{\lambda} (\Phi_{z,p_1} (u) - T(u)).$$ The elements $\Phi_{z,u} (p_1)$ and  $T(p_1)$ are norm-one projections. Since $U_{p_1} (u) =0= U_{1-p_1} (u) $, $$U_{T(p_1)} (T(u)) =0= U_{1-T(p_1)} (T(u)),$$ and $$U_{T(p_1)} (\Phi_{z,p_1}(u)) =U_{\Phi_{z,p_1}(p_1)} (\Phi_{z,p_1}(u)) =0,$$ $$0= U_{1-\Phi_{z,p_1}(p_1)} (\Phi_{z,p_1}(u)) = U_{1-T(p_1)} (\Phi_{z,p_1}(u)),$$ we deduce from \eqref{eq FriRu 1.6 JB*} that $$\frac{\mu}{\lambda} (\Phi_{z,p_1} (u) - T(u))=0,$$ which gives $\Phi_{z,p_1} (u) = T(u),$ and hence $T(\lambda p_1 + \mu u ) = \lambda T(p_1) + \mu T(u).$\smallskip

Similarly, we show that \begin{equation}\label{eq two in Lema H2} T(\lambda p_2 + \mu u ) = \lambda T(p_2) + \mu T(u),
\end{equation} for every $\lambda, \mu \in \mathbb{R}$.\smallskip

To conclude the proof we shall prove that $$T(\lambda p_1 + \mu u + \gamma p_2) = \lambda T(p_1) + \mu T(u) + \gamma T(p_2),$$ for every $\lambda, \mu, \gamma \in \mathbb{R}$. Pick $\lambda, \mu, \gamma \in \mathbb{R}\backslash \{0\}$ and set $z=\lambda p_1 + \mu u + \gamma p_2$. Applying \eqref{eq one in Lema H2} we get \begin{equation}\label{eq *1}T(z) = \Phi_{z,\lambda p_1 + \mu u} (z) = \Phi_{z,\lambda p_1 + \mu u} (\lambda p_1 + \mu u ) + \gamma \Phi_{z,\lambda p_1 + \mu u} ( p_2)
 \end{equation} $$ =  T (\lambda p_1 + \mu u ) + \gamma \Phi_{z,\lambda p_1 + \mu u} ( p_2)  = \lambda T (p_1) + \mu T (u ) + \gamma \Phi_{z,\lambda p_1 + \mu u} ( p_2).$$ And on the other hand,
\begin{equation}\label{eq *2}T(z) = \Phi_{z,p_2} (z) = \lambda \Phi_{z, p_2} ( p_1 )  + \mu \Phi_{z, p_2} ( u ) + \gamma T ( p_2).
 \end{equation} Combining \eqref{eq *1} and \eqref{eq *2} we get $$\Phi_{z,\lambda p_1 + \mu u} ( p_2) = T(p_2) +\frac{1}{\gamma} (\lambda \Phi_{z, p_2} ( p_1 ) - \lambda T(p_1) ) +\frac{\mu}{\gamma} (\Phi_{z, p_2} ( u )- T(u)).$$ Having in mind that $\Phi_{z,\lambda p_1 + \mu u} ( p_2)$ and $ T(p_2)$ are norm-one projections, $$U_{T(p_2)} (T(u)) =0= U_{1-T(p_2)} (T(u)),$$ $$U_{T(p_2)} (\Phi_{z,p_2}(u)) =U_{\Phi_{z,p_2}(p_2)} (\Phi_{z,p_2}(u)) =0,$$ $$U_{1-T(p_2)} (\Phi_{z,p_2}(u))= U_{1-\Phi_{z,p_2}(p_2)} (\Phi_{z,p_2}(u)) = 0,$$ by orthogonality, $$U_{1-T(p_2)} (T(p_1))= T(p_1),$$ and $$ U_{1-T(p_2)} (\Phi_{z, p_2}(p_1))= U_{1-\Phi_{z, p_2}(p_2)} (\Phi_{z, p_2}(p_1))= \Phi_{z, p_2}(p_1),$$ we deduce from \eqref{eq FriRu 1.6 JB*} that $$\Phi_{z, p_2} ( u )= T(u).$$ Therefore, \eqref{eq *2} writes in the form \begin{equation}\label{eq *2 new} T(z) = \lambda \Phi_{z, p_2} ( p_1 )  + \mu T ( u ) + \gamma T ( p_2).
 \end{equation}

Now, applying \eqref{eq two in Lema H2} we get \begin{equation}\label{eq *4}T(z) = \Phi_{z,\gamma p_2 + \mu u} (z) = \lambda \Phi_{z,\gamma p_2 + \mu u} (p_1) + \mu T (u ) + \gamma T ( p_2).
 \end{equation} Independently,
\begin{equation}\label{eq *3} T(z) = \Phi_{z,p_1} (z) = \lambda T ( p_1 )  + \mu \Phi_{z, p_1} ( u ) + \gamma \Phi_{z, p_1} ( p_2).
 \end{equation} Arguing as in the previous paragraphs, we deduce from these identities that $\Phi_{z, p_1} ( u ) = T(u)$, and thus \eqref{eq *3} writes in the form \begin{equation}\label{eq *3 new} T(z) = \lambda T ( p_1 )  + \mu T ( u ) + \gamma \Phi_{z, p_1} ( p_2).
 \end{equation}

Combining \eqref{eq *2 new} and \eqref{eq *3 new} we obtain: $$ \Phi_{z, p_1} ( p_2) =  T ( p_2) + \frac{\lambda}{ \gamma} (\Phi_{z, p_2} ( p_1 )  -  T ( p_1 )).$$ Since $\Phi_{z, p_2} ( p_1) \perp \Phi_{z, p_2} ( p_2) = T(p_2)$, $T(p_2) \perp T(p_1)$, the projection $\Phi_{z, p_1} ( p_2)$ is equal to the orthogonal sum of the projection $T(p_2)$ and the element $ \frac{\lambda}{ \gamma} (\Phi_{z, p_2} ( p_1 )  -  T ( p_1 ))$. It follows that $\frac{\lambda}{ \gamma} (\Phi_{z, p_2} ( p_1 )  -  T ( p_1 ))$ is a projection orthogonal to $T(p_2)$. However, $\Phi_{z, p_1} ( p_2) \perp \Phi_{z, p_1} ( p_1) = T(p_1)$ assures that
$$ \frac{\lambda}{ \gamma} (\Phi_{z, p_2} ( p_1 )  -  T ( p_1 )) =  (\Phi_{z, p_1} ( p_2) -  T ( p_2)) \perp T(p_1).$$ That is, \begin{equation}\label{eq *5 perp} (\Phi_{z, p_2} ( p_1 )  -  T ( p_1 )) \perp \left(T(p_2) + T(p_1) \right)=T(p_1+p_2) = T(1)
  \end{equation}(compare \eqref{eq linearity Jordan *-homomoprhisms}).\smallskip

It is known that for each $a\in M_2 (\mathbb{R})_{sa}$, $U_{T(1)} T(a) = U_{\Phi_{a,1}(1)} \Phi_{a,1} (a) = \Phi_{a,1} U_1 (a) = T(a)$. In particular, the identity \eqref{eq *2 new} implies that $$U_{T(1)} (\Phi_{z, p_2} ( p_1 ) ) = \Phi_{z, p_2} ( p_1 ),$$ and consequently $$ U_{T(1)} (\Phi_{z, p_2} ( p_1 ) -  T ( p_1 ) ) = \Phi_{z, p_2} ( p_1 ) -  T ( p_1 ).$$ This identity together with \eqref{eq *5 perp} show that $\Phi_{z, p_2} ( p_1 ) -  T ( p_1 )=0$. Finally, the identity \eqref{eq *2 new} writes in the form  $$T(\lambda p_1 + \mu u + \gamma p_2) = \lambda T ( p_1 )  + \mu T ( u ) + \gamma T ( p_2),$$ which proves the linearity of $T$.
\end{proof}

Let $\mathcal{J}_1$ and $\mathcal{J}_2$ be two JBW-algebras satisfying that, for each JB-algebra $\mathcal{B}$, every 2-local Jordan homomorphism $S: \mathcal{J}_i \to B$ is linear. Suppose $T: \mathcal{J}_1\oplus^{\infty} \mathcal{J}_2 \to B$ is a 2-local Jordan homomorphism. Given $a\in\mathcal{J}_1$, $b \in \mathcal{J}_2$, and $\varepsilon>0$, we can find two families $p_1,\ldots,p_m$ and $q_1,\ldots,q_k$ of mutually orthogonal projections in $\mathcal{J}_1$ and  $\mathcal{J}_2$, respectively, and real numbers $\lambda_i$, $\mu_j$ such that $$\left\|a-\sum_{i=1}\lambda_i p_i \right\| <\frac{\varepsilon}{4}, \hbox{ and } \left\|b-\sum_{j=1}\mu_j q_j \right\| <\frac{\varepsilon}{4}.$$ We note that $p_i\perp q_j$, for every $i,j$. In this case, by \eqref{eq linearity Jordan *-homomoprhisms}, $$\left\|T(a +b) -T(a)-T(b) \right\|  \leq \left\|T(a +b) -T\left(\sum_{i=1}\lambda_i p_i+ \sum_{j=1}\mu_j q_j \right) \right\| $$
$$+\left\|T\left(\sum_{i=1}\lambda_i p_i \right)-T(a) \right\|+ \left\|T\left(\sum_{j=1}\mu_j q_j \right)-T(b) \right\|< \varepsilon.$$ Since $\varepsilon$ was arbitrarily chosen, the assumptions on $\mathcal{J}_1$ and $\mathcal{J}_2$ show that $T$ is linear.\smallskip

A induction argument shows:

\begin{corollary}\label{c stability for ell infty sums JBW} Let $\left(\mathcal{J}_i\right)_{i=1}^{n}$ be a finite family of JBW-algebras and let $\mathcal{B}$ be a JB-algebra. Suppose that, for every $i$, every 2-local Jordan homomorphism $T: \mathcal{J}_i \to \mathcal{B}$ is linear. Then every 2-local Jordan homomorphism $\displaystyle T: \bigoplus_{i=1,\ldots,n}^{\ell_{\infty}} \mathcal{J}_i \to \mathcal{B}$ is linear. $\hfill \Box$
\end{corollary}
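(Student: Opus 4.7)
The plan is to prove Corollary \ref{c stability for ell infty sums JBW} by a straightforward induction on $n$, treating the case $n=2$ as essentially established by the paragraph immediately preceding the statement. The base case $n=1$ is trivial, while for $n=2$ we invoke the argument sketched there: given $a \in \mathcal{J}_1$, $b \in \mathcal{J}_2$, and $\varepsilon > 0$, the spectral resolution of self-adjoint elements in a JBW-algebra allows us to approximate $a$ and $b$ in norm by finite linear combinations of mutually orthogonal projections $\sum_i \lambda_i p_i$ and $\sum_j \mu_j q_j$ respectively. Because projections drawn from different summands of an $\ell_\infty$-sum are automatically orthogonal, the combined family $\{p_i\} \cup \{q_j\}$ is mutually orthogonal, so \eqref{eq linearity Jordan *-homomoprhisms} yields linearity of $T$ on any such combination. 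Combining this with the 1-Lipschitz property (the Jordan analogue of Lemma \ref{l basic properties}$(e)$) gives $\|T(a+b) - T(a) - T(b)\| < \varepsilon$, and hence additivity on arbitrary $a+b$.

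For the inductive step, suppose the result holds for families of size at most $n$, and let $T: \bigoplus_{i=1}^{n+1}{}^{\ell_\infty} \mathcal{J}_i \to \mathcal{B}$ be a 2-local Jordan homomorphism. Set $\mathcal{J}' := \bigoplus_{i=1}^{n}{}^{\ell_\infty} \mathcal{J}_i$, so that the domain splits as $\mathcal{J}' \oplus^{\infty} \mathcal{J}_{n+1}$. A finite $\ell_\infty$-sum of JBW-algebras is again a JBW-algebra, so $\mathcal{J}'$ itself enjoys the spectral approximation property needed in the base-case argument. Moreover, the inductive hypothesis guarantees that every 2-local Jordan homomorphism from $\mathcal{J}'$ into any JB-algebra is linear. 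Therefore the two hypotheses required by the $n=2$ argument are satisfied for the pair $(\mathcal{J}', \mathcal{J}_{n+1})$, and that argument applies verbatim to show that $T$ is additive, hence linear.

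The only potential obstacle is checking that the $n=2$ reasoning in the preceding paragraph truly goes through for a compound first summand $\mathcal{J}'$; but the reasoning uses $\mathcal{J}'$ only as a JBW-algebra on which 2-local Jordan homomorphisms are known to be linear, and both properties are preserved under finite $\ell_\infty$-sums. This closes the induction and completes the proof.
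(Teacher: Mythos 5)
Your proposal is correct and follows essentially the same route as the paper: the two-summand case via spectral approximation by mutually orthogonal projections, \eqref{eq linearity Jordan *-homomoprhisms}, and the 1-Lipschitz property, followed by induction on $n$ grouping the first $n$ summands into a single JBW-algebra. Your explicit check that the compound summand $\mathcal{J}'$ still satisfies the needed hypotheses is a welcome clarification of the step the paper leaves as ``an induction argument shows.''
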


We can establish now the Jordan version of Corollary \ref{c type I2}.

\begin{corollary}\label{c type I2 Jordan} Every (not necessarily linear) 2-local Jordan $^*$-homomor-phism from a Type $I_2$ JBW$^*$-algebra into a JB$^*$-algebra is linear and a Jordan $^*$-homomorphism.
\end{corollary}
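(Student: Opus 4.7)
The plan is to follow the blueprint of Corollary \ref{c type I2} in the Jordan setting, substituting the Jordan tools developed in this section for the C$^*$-tools used there. First I would invoke the structure theorem for Type $I_2$ JBW$^*$-algebras (the Jordan analogue of Sakai's decomposition \cite[Theorem 2.3.3]{Sak}; see \cite{HOS}) to realise $\mathcal{J}$, up to Jordan $^*$-isomorphism, as $C(K)\overline{\otimes} M_2(\mathbb{R})_{sa}^{\mathbb{C}}$, where $K$ is a hyperstonean compact space and $M_2(\mathbb{R})_{sa}^{\mathbb{C}}$ denotes the JB$^*$-complexification of the prototype Type $I_2$ JBW-factor handled in Proposition \ref{p 2-local Jordan *-hom on H2}. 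The centre of $\mathcal{J}$ is then identified with $C(K)$.

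Next, for any family $p_1,\ldots,p_m$ of mutually orthogonal projections in $C(K)$ with $\sum_{i} p_i = \mathbf{1}$, the Jordan $^*$-subalgebra
$$ \mathcal{J}_{p_1,\ldots,p_m} := \bigoplus_{i=1}^{m} p_i \otimes M_2(\mathbb{R})_{sa}^{\mathbb{C}} $$
is Jordan $^*$-isomorphic to the finite $\ell_\infty$-sum of $m$ copies of $M_2(\mathbb{R})_{sa}^{\mathbb{C}}$. The restriction of $T$ to it is again a 2-local Jordan $^*$-homomorphism, so I would apply Proposition \ref{p 2-local Jordan *-hom on H2} to each self-adjoint summand, extend to the whole summand through the complexification identity $(4.i)$, and then glue across the summands via Corollary \ref{c stability for ell infty sums JBW}. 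The outcome is that $T|_{\mathcal{J}_{p_1,\ldots,p_m}}$ is linear.

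To finish I would copy the approximation step from Corollary \ref{c type I2}. Given $x,y\in \mathcal{J}$ and $\varepsilon>0$, standard facts about abelian von Neumann algebras (cf.\ \cite[Lemma 8.3]{Mae}) supply a partition of unity $p_1,\ldots,p_m$ in $C(K)$ and elements $x_\varepsilon,y_\varepsilon\in \mathcal{J}_{p_1,\ldots,p_m}$ with $\|x-x_\varepsilon\|,\|y-y_\varepsilon\|<\varepsilon/4$. The linearity of $T$ on $\mathcal{J}_{p_1,\ldots,p_m}$ combined with the 1-Lipschitz property of $T$ (Lemma \ref{l basic properties}$(e)$, which transfers to the Jordan setting) will yield $\|T(x+y)-T(x)-T(y)\|<\varepsilon$, hence additivity of $T$ on $\mathcal{J}$. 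Together with 1-homogeneity (Lemma \ref{l basic properties}$(a)$) this gives linearity, and a final appeal to the Jordan version of \cite[Theorem 3.9]{Pe2014} promotes $T$ to a Jordan $^*$-homomorphism.

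The step I expect to be the main obstacle is the very first one: Type $I_2$ JBW-factors are genuine spin factors and are not all exhausted by $M_2(\mathbb{R})_{sa}$, so the structural reduction to the prototype handled in Proposition \ref{p 2-local Jordan *-hom on H2} is not quite off-the-shelf. One either has to upgrade Proposition \ref{p 2-local Jordan *-hom on H2} to arbitrary spin factors (by mimicking its three-projection argument inside a spin system of rank two) or to extract a central decomposition isolating the $M_2(\mathbb{R})_{sa}^{\mathbb{C}}$-fibred part of $\mathcal{J}$. Once this structural point is settled, everything else is a routine Jordan transcription of the proof of Corollary \ref{c type I2}.
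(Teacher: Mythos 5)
Your plan reproduces the paper's proof almost verbatim except at the one point you yourself flag as the obstacle, and that obstacle is a genuine gap: a Type $I_2$ JBW$^*$-algebra is \emph{not} globally of the form $C(K)\otimes M_2(\mathbb{R})_{sa}$ (complexified); its fibres are spin factors of arbitrary rank, so the structural reduction in your first step simply fails. Of your two proposed repairs, the first (upgrading Proposition \ref{p 2-local Jordan *-hom on H2} to arbitrary spin factors) is left entirely open and is nontrivial, and the second (a central decomposition isolating an $M_2(\mathbb{R})_{sa}$-fibred part) does not exist in general. The paper's repair is different and cheaper: it is the localization used by Bunce and Hamhalter in \cite{BuHam}. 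By $(4.i)$ it suffices to prove additivity of $T$ at two self-adjoint elements $x,y$, and the JBW-subalgebra $\mathcal{J}_{sa}(x,y)$ generated by \emph{these two elements alone} does identify, via Stacey's theorem \cite[Theorem 2]{Sta82}, with $C(K,\mathbb{R})\otimes M_2(\mathbb{R})_{sa}$ --- morally because two elements of a spin factor $V\oplus\mathbb{R}$ generate a spin subfactor whose $V$-part has dimension at most two. The restriction of $T$ to this $2$-generated JBW-subalgebra is still a 2-local Jordan $^*$-homomorphism, so the prototype handled in Proposition \ref{p 2-local Jordan *-hom on H2} is exactly the right one.

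Once you replace your global decomposition by the subalgebra $\mathcal{J}_{sa}(x,y)$, the remainder of your argument --- partition of unity $p_1,\ldots,p_m$ in $C(K,\mathbb{R})$, linearity of $T$ on each $\bigoplus_i p_i\otimes M_2(\mathbb{R})_{sa}$ via Proposition \ref{p 2-local Jordan *-hom on H2} and Corollary \ref{c stability for ell infty sums JBW}, the $\varepsilon/4$-approximation combined with the $1$-Lipschitz estimate of Lemma \ref{l basic properties}$(e)$, and finally $(4.i)$ --- is precisely the paper's proof, so no further work is needed there. One small additional remark: the closing appeal to a Jordan version of \cite[Theorem 3.9]{Pe2014} is unnecessary; once $T$ is known to be linear, Lemma \ref{l basic properties}$(d)$ and $(f)$ (which hold in the Jordan setting) already give $T(a^2)=T(a)^2$ and $T(a^*)=T(a)^*$, so $T$ is a Jordan $^*$-homomorphism.
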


\begin{proof} Having in mind $(4.i)$, we observe that it is enough to prove that $T: \mathcal{J}_{sa} \to \mathcal{B}_{sa}$ is additive. We follow some of the arguments given by Bunce and Hamhalter in \cite[Theorem in page 158]{BuHam}. Let $x,y\in \mathcal{J}_{sa}$. As noted by the authors in \cite[proof of Theorem in page 158]{BuHam}, the JBW-subalgebra, $\mathcal{J}_{sa} (x,y)$, of $\mathcal{J}_{sa}$ generated by $x$ and $y$ identifies, via \cite[Theorem 2]{Sta82}, $$ \mathcal{J}_{sa}(x,y) = C(K,\mathbb{R}) \otimes M_2(\mathbb{R})_{sa},$$ where $C(K,\mathbb{R})$ is the algebra of all continuous real-valued functions on a compact Stonean space $K$.\smallskip

When in the proof of Corollary \ref{c type I2}, Proposition \ref{p 2-local *-hom on M2} and Corollary \ref{c stability for ell infty sums} are replaced with Proposition \ref{p 2-local Jordan *-hom on H2} and Corollary \ref{c stability for ell infty sums JBW}, respectively, the arguments remain valid to prove the desired statement.
\end{proof}

Combining Theorem \ref{t 2-local Jordan hom on von Neumann algebras not containing I_2}, Corollaries \ref{c type I2 Jordan} and \ref{c stability for ell infty sums JBW} and the structure theory of JBW$^*$-algebras (cf. \cite[\S 5]{HOS}) we obtain the following Jordan version of Theorem \ref{t 2-local hom on von Neumann algebras}.

\begin{theorem}\label{t 2-local Jordan hom on JBW-algebras} Every (not necessarily linear) 2-local Jordan $^*$-homomor-phism from a JBW$^*$-algebra into a JB$^*$-algebra is linear and a Jordan $^*$-homomorphism.$\hfill\Box$
\end{theorem}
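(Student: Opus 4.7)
The plan is to imitate the proof of Theorem \ref{t 2-local hom on von Neumann algebras} in the Jordan setting, with the three pieces already in place: the non-Type $I_2$ case (Theorem \ref{t 2-local Jordan hom on von Neumann algebras not containing I_2}), the Type $I_2$ case (Corollary \ref{c type I2 Jordan}), and the stability of linearity under finite $\ell_\infty$-sums (Corollary \ref{c stability for ell infty sums JBW}). The structural input is the classification of JBW$^*$-algebras (see \cite[\S 5]{HOS}), which provides, for every JBW$^*$-algebra $\mathcal{J}$, a unique decomposition $\mathcal{J} = \mathcal{J}_1 \oplus^{\infty} \mathcal{J}_2$ as an $\ell_{\infty}$-sum, where $\mathcal{J}_1$ has no Type $I_2$ summand and $\mathcal{J}_2$ is of Type $I_2$ (either summand possibly being zero, in which case the claim is immediate from the appropriate earlier result).

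Let $T : \mathcal{J} \to \mathcal{B}$ be a 2-local Jordan $^*$-homomorphism, and write $\pi_i : \mathcal{J} \to \mathcal{J}_i$ for the canonical central projections. The first step is to observe that the restriction $T|_{\mathcal{J}_i} : \mathcal{J}_i \to \mathcal{B}$ is again a 2-local Jordan $^*$-homomorphism: given $a,b \in \mathcal{J}_i \subseteq \mathcal{J}$, the Jordan $^*$-homomorphism $\Phi_{a,b} : \mathcal{J} \to \mathcal{B}$ supplied by the 2-locality of $T$ restricts to a Jordan $^*$-homomorphism on the subalgebra $\mathcal{J}_i$ that still interpolates $T$ at $a$ and $b$. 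Consequently, Theorem \ref{t 2-local Jordan hom on von Neumann algebras not containing I_2} applies to $T|_{\mathcal{J}_1}$ and Corollary \ref{c type I2 Jordan} applies to $T|_{\mathcal{J}_2}$, yielding that both restrictions are linear Jordan $^*$-homomorphisms.

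To pass from linearity of the restrictions to linearity of $T$ on the whole $\ell_\infty$-sum, apply Corollary \ref{c stability for ell infty sums JBW} to the finite family $\{\mathcal{J}_1, \mathcal{J}_2\}$ (used in its Jordan $^*$-version; this is justified because the argument preceding Corollary \ref{c stability for ell infty sums JBW} only relies on the orthogonal decomposition $a+b$ for elements in distinct summands together with \eqref{eq linearity Jordan *-homomoprhisms} and Lemma \ref{l basic properties}$(e)$, all of which carry over verbatim to the Jordan $^*$ case). This establishes the additivity, and hence, combined with $(4.i)$ and 1-homogeneity (the Jordan analogue of Lemma \ref{l basic properties}$(a)$), the full linearity of $T$.

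Once $T$ is known to be linear, it is by construction a linear local Jordan $^*$-homomorphism, and the conclusion that $T$ is a Jordan $^*$-homomorphism follows from the Jordan analogue of \cite[Theorem 3.9]{Pe2014} for linear local Jordan $^*$-homomorphisms (and this fact is already encoded in the statements of Theorem \ref{t 2-local Jordan hom on von Neumann algebras not containing I_2} and Corollary \ref{c type I2 Jordan}, which deliver a Jordan $^*$-homomorphism on each summand; the $\ell_\infty$-gluing then yields one on all of $\mathcal{J}$). There is no substantive obstacle left: the only point that requires a small verification is that Corollary \ref{c stability for ell infty sums JBW}, stated for 2-local Jordan homomorphisms into JB-algebras, transfers to 2-local Jordan $^*$-homomorphisms into JB$^*$-algebras, which is routine given $(4.i)$.
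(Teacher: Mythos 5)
Your proposal is correct and follows essentially the same route as the paper: decompose the JBW$^*$-algebra as the $\ell_\infty$-sum of its Type $I_2$ part and its part with no Type $I_2$ summand, apply Theorem \ref{t 2-local Jordan hom on von Neumann algebras not containing I_2} and Corollary \ref{c type I2 Jordan} to the restrictions, and glue via Corollary \ref{c stability for ell infty sums JBW}. The paper leaves these verifications implicit, so your explicit checks (that restrictions to summands remain 2-local Jordan $^*$-homomorphisms, and that the $\ell_\infty$-stability transfers from the JB to the JB$^*$ setting via $(4.i)$) are a welcome elaboration rather than a deviation.
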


\begin{corollary}\label{c 2-local Jordan hom on von Neumann algebras} Every (not necessarily linear) 2-local Jordan $^*$-homomor-phism from a von Neumann algebra into a C$^*$-algebra is linear and a Jordan $^*$-homomorphism.$\hfill\Box$
\end{corollary}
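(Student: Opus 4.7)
The plan is to deduce this corollary directly from Theorem \ref{t 2-local Jordan hom on JBW-algebras} by observing that the hypotheses of the corollary are a special case of the hypotheses of that theorem.

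The first step is to recall that every C$^*$-algebra $A$ becomes a JB$^*$-algebra when equipped with the Jordan product $a\circ b := \frac12 (ab+ba)$ and the original involution and norm; the axiom $\|U_a(a^*)\| = \|a\|^3$ follows from the C$^*$-identity since $U_a(a^*)= a a^* a$. Moreover, if $\mathcal{M}$ is a von Neumann algebra, then, being a dual Banach space under its original norm, the induced JB$^*$-algebra is a JBW$^*$-algebra. Thus $\mathcal{M}$ and the target C$^*$-algebra $B$ fall within the scope of Theorem \ref{t 2-local Jordan hom on JBW-algebras}.

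The second step is to observe that every $^*$-homomorphism between C$^*$-algebras is automatically a Jordan $^*$-homomorphism between the underlying JB$^*$-algebras, since it preserves the symmetrised product $a\circ b$. Consequently, if $T:\mathcal{M}\to B$ is a 2-local Jordan $^*$-homomorphism in the sense of the corollary, that is, for every pair $a,b\in \mathcal{M}$ there exists a Jordan $^*$-homomorphism $\Phi_{a,b}:\mathcal{M}\to B$ with $\Phi_{a,b}(a) = T(a)$ and $\Phi_{a,b}(b) = T(b)$, then the very same maps $\Phi_{a,b}$ witness that $T$ is a 2-local Jordan $^*$-homomorphism between the JBW$^*$-algebra $\mathcal{M}$ and the JB$^*$-algebra $B$.

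The third and final step is to apply Theorem \ref{t 2-local Jordan hom on JBW-algebras} to conclude that $T$ is linear and a Jordan $^*$-homomorphism. There is essentially no obstacle here: the whole content of the corollary lies in reading off the JBW$^*$/JB$^*$ structure of von Neumann and C$^*$-algebras. The only point worth a brief check is that no additional regularity assumption enters; since Theorem \ref{t 2-local Jordan hom on JBW-algebras} itself does not require $T$ to be linear or continuous, and the relation between the C$^*$ and Jordan structures is canonical, the reduction is immediate.
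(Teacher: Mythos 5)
Your proposal is correct and takes exactly the route the paper intends: the corollary is stated with no written proof precisely because it is the special case of Theorem \ref{t 2-local Jordan hom on JBW-algebras} obtained by viewing the von Neumann algebra as a JBW$^*$-algebra and the target C$^*$-algebra as a JB$^*$-algebra under the product $a\circ b=\frac12(ab+ba)$, with the same witnessing Jordan $^*$-homomorphisms $\Phi_{a,b}$. Your identification of the canonical JB$^*$/JBW$^*$ structures and of the coincidence of the two notions of Jordan $^*$-homomorphism is all that is needed.
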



\begin{thebibliography}{22}
\bibitem{Aarnes70} J.F. Aarnes, Quasi-states on C$^*$-algebras, \emph{Trans. Amer. Math. Soc.} \textbf{149},
601-625 (1970).

\bibitem{AyuKuday2012} Sh. Ayupov, K. Kudaybergenov, 2-local derivations and automorphisms on $B(H)$, \emph{J. Math. Anal. Appl.} \textbf{395}, no. 1, 15-18 (2012).


\bibitem{Alex} J. C. Alexander, Compact Banach algebras,
\emph{Proc. London Math. Soc.} (3) \textbf{18}, 1-18 (1968).








\bibitem{BuHam}  L.J. Bunce, J. Hamhalter, Traces and subadditive measures on projections in JBW-algebras and von Neumann algebras, \emph{Proc. Amer. Math. Soc.} \textbf{123}, 157-160 (1995).

\bibitem{BuWri89} L.J. Bunce, J.D.M. Wright, Continuity and linear extensions of quantum measures on Jordan
operator algebras, \emph{Math. Scand.} \textbf{64}, 300-306 (1989).

\bibitem{BuWri92} L.J. Bunce, J.D.M. Wright, The Mackey-Gleason problem, \emph{Bull. Amer. Math. Soc.} \textbf{26}, 288-293 (1992).

\bibitem{BuWri94} L.J. Bunce, J.D.M. Wright, The Mackey-Gleason problem for vector measures on projections in von Neumann algebras, \emph{J. London Math. Soc.} \textbf{49}, 133-149 (1994).

\bibitem{BurFerGarMarPe} M. Burgos, F.J. Fern{\' a}ndez-Polo,
J.J. Garc{\'e}s, J. Mart{\'i}nez Moreno, A.M. Peralta,
Orthogonality preservers in C$^*$-algebras, JB$^*$-algebras and
JB$^*$-triples, \emph{J. Math. Anal. Appl.}, \textbf{348}, 220-233
(2008).

\bibitem{BurFerGarPe09} M. Burgos, F.J. Fern{\' a}ndez-Polo, J.J.
Garc{\'e}s, A.M. Peralta, Orthogonality preservers Revisited,
\emph{Asian-European Journal of Mathematics} \textbf{2}, No. 3,
387-405 (2009).





\bibitem{Chu2012} Ch.-H. Chu, \emph{Jordan Structures in Geometry and Analysis}, Cambridge Tracts in Math. 190, Cambridge. Univ. Press, Cambridge, 2012.


\bibitem{CristLocalAutomorph} R. Crist, Local automorphisms, \emph{Proc. Amer. Math. Soc.} \textbf{128}, 1409-1414 (1999).




\bibitem{Fos2012} A. Fo\v{s}ner, 2-local Jordan automorphisms on operator algebras, \emph{Studia Math.} \textbf{209}, no. 3, 235-246 (2012).

\bibitem{Fos2014} A. Fo\v{s}ner, 2-local mappings on algebras with involution, \emph{Ann. Funct. Anal.} \textbf{5}, no. 1, 63-69 (2014).

\bibitem{FriRu85} Y. Friedman, B. Russo, Structure of the predual of a
JBW$^*$-triple, \emph{J. Reine Angew. Math.} {\bf 356}, 67-89 (1985).





\bibitem{Gle} A.M. Gleason, A characterization of maximal ideals, \emph{J. Analyse Math.} \textbf{19}, 171-172 (1967).

\bibitem{Gy} M. Gy\H{o}ry, 2-local isometries of $C_0(X)$, \emph{Acta Sci. Math. (Szeged)} \textbf{67}, no. 3-4, 735-746 (2001).


\bibitem{HadLi04} D. Hadwin, J. Li, Local derivations and local automorphisms, \emph{J. Math. Anal. Appl.} \textbf{290}, no. 2, 702-714 (2004).


\bibitem{HOS} H. Hanche-Olsen, E. St\o rmer, Jordan Operator Algebras, Pitman, London, 1984.


\bibitem{John01} B.E. Johnson, Local derivations on C$^*$-algebras are derivations,
\emph{Trans. Amer. Math. Soc.} \textbf{353}, 313-325 (2001).


\bibitem{Kad90} R.V. Kadison, Local derivations, \emph{J. Algebra} \textbf{130}, 494-509 (1990).

\bibitem{KaZe} J.P. Kahane, W. \.{Z}elazko, A characterization of maximal ideals in commutative Banach algebras,
\emph{Studia Math.} \textbf{29}, 339-343 (1968).

\bibitem{Kap} I. Kaplansky, The structure of certain operator
algebras, \emph{Trans. Amer. Math. Soc.} \textbf{70}, 219-255
(1951).



\bibitem{KimKim04} S.O. Kim, J.S. Kim, Local automorphisms and derivations on $\mathbb{M}_n$, \emph{Proc. Amer. Math. Soc.} \textbf{132}, no. 5, 1389-1392 (2004).

\bibitem{KimKim05} S.O. Kim, J.S. Kim, Local automorphisms and derivations on certain C$^*$-algebras, \emph{Proc. Amer. Math. Soc.} \textbf{133}, no. 11, 3303-3307 (2005).


\bibitem{KoSlod} S. Kowalski, Z. S{\l}odkowski, A characterization of multiplicative linear functionals in
Banach algebras, \emph{Studia Math.} \textbf{67}, 215-223 (1980).

\bibitem{LarSou} D.R. Larson and A.R. Sourour, Local derivations and local automorphisms of $B(X)$, \emph{Proc. Sympos. Pure Math.} \textbf{51}, Part 2, Providence, Rhode Island 1990, pp. 187-194.


\bibitem{LiuWong06} J.-H. Liu, N.-C. Wong, 2-Local automorphisms of operator algebras, \emph{J. Math. Anal. Appl.} \textbf{321} 741-750 (2006).




\bibitem{Mae} S. Maeda, Probability measures on projections in von Neumann algebras, \emph{Rev. Math. Phys.} \textbf{1}, 235-290  (1990).



\bibitem{Mol2003} L. Molnar, Local automorphisms of operator algebras on Banach spaces, \emph{Proc. Amer. Math.
Soc.} \textbf{131}, 1867-1874 (2003).







\bibitem{Pe2014} A.M. Peralta, A note on 2-local representations of C$^*$-algebras, preprint 2014.

\bibitem{Pop} F. Pop, On local representation of von Neumann algebras, \emph{Proc. Amer. Math. Soc.} \textbf{132}, No. 12, Pages 3569-3576 (2004).

\bibitem{Sak} S. Sakai, \emph{C$^*$-algebras and W$^*$-algebras}, Springer-Verlag, Berlin 1971.

\bibitem{Semrl97}  P. \v{S}emrl, Local automorphisms and derivations on $B(H)$, \emph{Proc. Amer. Math. Soc.} \textbf{125}, 2677-2680 (1997).



\bibitem{Sta82} P.J. Stacey, Type $I_2$ JBW-algebras, \emph{Quart. J. Math. Oxford} \textbf{33}, 115-127 (1982).





\bibitem{Wri77} J.D.M. Wright, Jordan C*-algebras, \emph{Michigan Math. J.} \textbf{24}
291-302 (1977).


\bibitem{Yli} K. Ylinen, \emph{Compact and finite-dimensional elements of normed algebras}, Ann. Acad. Sci. Fenn. Ser. A I, No. \textbf{428} (1968).

\bibitem{Ze68} W. \.{Z}elazko, A characterization of multiplicative linear functionals in complex Banach algebras, \emph{Studia Math.} \textbf{30} 83-85 (1968).

\end{thebibliography}
\end{document}